\theoremstyle{plain}
\newtheorem{theo}{Theorem}[section]
\newtheorem{lemma}[theo]{Lemma}
\newtheorem{prop}[theo]{Proposition}
\newtheorem{cor}[theo]{Corollary}
\numberwithin{equation}{section}
\theoremstyle{remark}
\newtheorem{remark}[theo]{Remark}
\theoremstyle{definition}
\def\C{\mathbb{C}}
\def\Z{\mathbb{Z}}
\def\PP{{\mathbb P}}
\def\pre-tr{\operatorname{pre-tr}}
\def\Hom{\operatorname{Hom}}
\def\End{\operatorname{End}}
\def\gr{\operatorname{gr}}
\newcommand{\Le}{{\mathbb L}}
\newcommand{\bbQ}{{\mathbb Q}}
\newcommand{\KM}{{\mathcal K \mathcal M}}
\newcommand{\CH}{{\mathcal C \mathcal H}}
\newcommand{\cO}{{\mathcal O}}
\newcommand{\cK}{{\mathcal K}}
\newcommand{\cH}{{\mathcal H}}
\newcommand{\Ker}{\operatorname{Ker}}
\newcommand{\Ext}{\operatorname{Ext}}
\newcommand{\Spec}{\operatorname{Spec}}
\newcommand{\rk}{\operatorname{rk}}
\newcommand{\Map}{\operatorname{Map}}
\newcommand \uno {{\mathbbm 1}}
\title{Integral Chow motives of threefolds with $K$-motives of unit type\footnotetext{This work is supported by the RSF under a grant 14-50-00005.}}
\author{Sergey Gorchinskiy\\ \\
\small{Steklov Mathematical Institute of Russian Academy of Sciences, Moscow, Russia}\\
\small{e-mails: {\tt gorchins@mi.ras.ru}}}
\date{}
\begin{document}

\maketitle

\begin{abstract}
We prove that if a smooth projective algebraic variety of dimension less or equal to three has a unit type integral $K$-motive, then its integral Chow motive is of Lefschetz type. As a consequence, the integral Chow motive is of Lefschetz type for a smooth projective variety of dimension less or equal to three that admits a full exceptional collection.
\end{abstract}

\section{Introduction}

There are various categories of Grothendieck motives of smooth projective algebraic varieties. A~category of motives depends on the choice of a global intersection theory, see Manin's exposition in~\cite[\S\,1]{Ma}. Among these categories, we have the category of Chow motives and the category of $K$-motives. The Chow motive of a smooth projective variety $X$ is controlled by algebraic cycles, more precisely, by Chow groups of products of $X$ with other varieties. The $K$-motive of~$X$ is controlled by vector bundles, more precisely, by $K_0$-groups of products of $X$ with other varieties. It makes sense to compare these two motives of~$X$.

Simplest Chow motives are that of Lefschetz type, that is, direct sums of tensor powers of the Lefschetz motive. Simplest $K$-motives are that of unit type, that is, direct sums of the unit object, see Section~\ref{sec:main} for more detail.

It was shown by the author and Orlov in~\cite[Prop.\,4.2]{GO} that if the Chow motive of~$X$ is of Lefschetz type, then the $K$-motive of $X$ is of unit type. A natural question is then whether the converse implication is also true:

\medskip

\begin{itemize}
\item[]{}
{\it {\sc Question.} Let $X$ be a smooth projective variety such that its $K$-motive is of unit type. Is it true that the Chow motive of $X$ is of Lefschetz type?}
\end{itemize}

\medskip

This question had been already asked by Bernardara and Tabuada in~\cite{BT}. In higher dimensions, the answer to Question is negative by~\cite[Prop.\,2.3]{BT}, where it is constructed an example of a quadric $X$ over a non-algebraically closed field such that the \mbox{$K$-motive} of~$X$ is of unit type and the Chow motive of $X$ is not of Lefschetz type. According to~\cite[Ex.\,5.4]{BT}, one can take the ground field $\bbQ(t_1,t_2,t_3)$, where $t_1,t_2,t_3$ are independent variables, and let $X$ be the \mbox{six-dimensional} Pfister quadric that corresponds to the quadratic form ${\langle 1,t_1\rangle\otimes \langle 1,t_2\rangle\otimes \langle 1,t_3\rangle}$. Nevertheless, it is not known whether the answer to Question is positive over an algebraically closed field.

Notice that with rational coefficients the Question is simple, because there is a well-known close relation between the categories of rational Chow motives and of rational $K$-motives, see Tabuada's~\cite[Theor.\,1.1]{Tabmot} in the context of non-commutative motives instead of $K$-motives and also the equivalence of categories~\eqref{eq:equivOrlov} in Section~\ref{sect:rat}. This implies that for any smooth projective variety, its rational Chow motive is of Lefschetz type if and only if its rational \mbox{$K$-motive} is of unit type, see~\cite[Theor.\,2.1]{BT},~\cite[Theor.\,1.3]{MT},~\cite[Prop.\,2.1(1)]{GKMS},~\cite[\S\,2]{Vial}, and also Proposition~\ref{prop:known}.

\medskip

The main result of the paper is a positive answer to Question in dimensions less or equal to three, see Theorem~\ref{thm:main} (in the case of dimension three we also assume that the characteristic of the ground field is not two). The most essential part in the proof of the theorem is the case of a threefold, the cases of dimensions one and two are much easier, see Remark~\ref{rmk:lawdim}.

\medskip

A rather direct application of Theorem~\ref{thm:main} is the following result, see Theorem~\ref{thm:main2}. Let $X$ be a smooth projective variety that admits a full exceptional collection and suppose that the dimension of $X$ is less or equal to three (in the case of dimension three we also assume that the characteristic of the ground field is not two); then the Chow motive of $X$ is of Lefschetz type. In fact, we require in Theorem~\ref{thm:main2} a weaker condition on $X$, namely, that $X$ admits only an exceptional collection of expected length.

When $X$ is a curve, the statement of Theorem~\ref{thm:main2} is easy. When $X$ is a surface, this was proved previously by Vial in~\cite[Theor.\,2.7]{Vial} by a different method based on delicate properties of exceptional collections on surfaces obtained in recent papers by Perling~\cite{Per} and Kuznetsov~\cite{KuzMS}.

\medskip

A motivation for Theorem~\ref{thm:main2} is as follows: it seems that varieties with a full exceptional collection tend to have Chow motives of Lefschetz type. The first example of a full exceptional collection was elaborated by Beilinson on a projective space, see~\cite{Bei}. Full exceptional collections are constructed now on many different varieties. Among them one has Grassmanians,~see~\cite{Kap} and~\cite{Fon}, and more general homogenous spaces over algebraically closed fields, see~\cite{KP} and references therein. The Chow motives of these varieties are known to be of Lefschetz type. Let us also mention another interesting example: recently, Kuznetsov has shown in~\cite{Kuz2} that the Chow motive of a certain K\"uchle fivefold (see~\cite{Kuz1} for K\"uchle varieties) is of Lefschetz type and it is expected that this K\"uchle fivefold admits a full exceptional collection. Finally, note that Orlov has constructed in~\cite{Or1},~\cite{Or2} embeddings of arbitrary exceptional collections into derived categories of smooth projective varieties and the Chow motives of these varieties are of Lefschetz type again.

\medskip

The author is grateful to D.\,Orlov and I.\,Panin for discussions on this subject and especially to A.\,Kuznetsov for a careful reading of the text and many useful suggestions.

\section{Statement of the main result}\label{sec:main}

Let $k$ be a field. All varieties are assumed to be over $k$ unless another ground field is specified explicitly. Given a field extension $k\subset L$ and a variety $V$, by $V_L$ we denote the extension of scalars of $V$ from $k$ to $L$.

We refer to~\cite{Ma} for details on the categories of Chow motives and $K$-motives. By~$\CH(k)$ denote the category of Chow motives over $k$ and by $\KM(k)$ denote the category of \mbox{$K$-motives} over $k$. Given a smooth projective variety $V$, by $M(V)$ denote its Chow motive in $\CH(k)$ and by $KM(V)$ denote its $K$-motive in $\KM(k)$. For irreducible smooth projective varieties $V_1$ and~$V_2$, we have
$$
\Hom_{\CH(k)}\big(M(V_1),M(V_2)\big)=CH^d(V_1\times V_2)\,,
$$
$$
\Hom_{\KM(k)}\big(KM(V_1),KM(V_2)\big)=K_0(V_1\times V_2)\,,
$$
where $d$ is the dimension of $V_1$. The assignments ${V\mapsto M(V)}$ and ${V\mapsto KM(V)}$ define contravariant functors from the category of smooth projective varieties over $k$ to the categories~$\CH(k)$ and~$\KM(k)$, respectively. The categories~$\CH(k)$ and $\KM(k)$ have natural symmetric monoidal structures that come from products of varieties. In both categories, the unit object is the motive of the point $\Spec(k)$, which we denote by $\uno$.

There are isomorphisms $M(\PP^1)\simeq\uno\oplus\Le$ and $KM(\PP^1)\simeq \uno\oplus\uno$, where $\Le$ is the Lefschetz motive. For short, we put $\Le^i:=\Le^{\otimes i}$ for $i\in\Z$, where $\Le^{-1}$ is the dual of $\Le$. A Chow motive in $\CH(k)$ is of {\it Lefschetz type} if it is isomorphic to a (finite) direct sum of copies of $\Le^i$ for some integers $i\in \Z$. A $K$-motive in~$\KM(k)$ is of {\it unit type} if it is isomorphic to a (finite) direct sum of copies of $\uno$.

\medskip

\begin{theo}\label{thm:main}
Let $X$ be a smooth projective variety of dimension $d$ over a field $k$. Suppose that the $K$-motive $KM(X)$ is of unit type and one of the following conditions is satisfied:
\begin{itemize}
\item[(i)]
we have $d\leqslant 2$;
\item[(ii)]
we have $d=3$ and the characteristic of $k$ is not $2$.
\end{itemize}
Then the Chow motive $M(X)$ is of Lefschetz type.
\end{theo}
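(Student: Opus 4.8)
The plan is to deduce the integral statement from its rational shadow, the whole difficulty lying in the control of torsion. As recalled in the excerpt, the equivalence between rational $K$-motives and rational Chow motives shows that $KM(X)$ of unit type already forces $M(X)_{\bbQ}$ to be of Lefschetz type; in particular the geometric cohomology of $X$ is of Tate type, concentrated in even degrees, which fixes the multiplicities $b_i$ in the expected decomposition $M(X)\simeq\bigoplus_i(\Le^i)^{\oplus b_i}$. The mechanism I would use is that a decomposition of the diagonal $\Delta_X\in CH^d(X\times X)$ as a sum of external products of cycle classes simultaneously splits $M(X)$ into Lefschetz summands and, by acting as a self-correspondence, forces $CH^*(X_L)$ to be free and finitely generated over every extension $L/k$. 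It therefore suffices to produce such a decomposition over $\Z$.

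First I would extract the full functorial content of the hypothesis. Writing $KM(X)\simeq\uno^{\oplus n}$, the inclusions and projections of the summands are classes $a_i\in\Hom_{\KM(k)}(\uno,KM(X))=K_0(X)$ and $b_i\in\Hom_{\KM(k)}(KM(X),\uno)=K_0(X)$ satisfying $b_j\circ a_i=\delta_{ij}$ and $\sum_i a_i\circ b_i=\id_{KM(X)}$. Since $\id_{KM(X)}=[\O_\Delta]\in K_0(X\times X)$ and each composite $a_i\circ b_i$ is an external product, this is exactly an integral decomposition of the $K$-theoretic diagonal. The same data yield a natural isomorphism $K_0(X\times Y)\cong K_0(Y)^{\oplus n}$ for every smooth projective $Y$, so all of these $K_0$-groups, and in particular $K_0(X_L)\cong\Z^n$, are free.

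The second step is to transport this decomposition to Chow groups through the $\gamma$-filtration. Grothendieck's comparison maps $\gr^i_\gamma K_0\to CH^i$ are isomorphisms after inverting $(i-1)!$, hence integral isomorphisms for $i\le 2$ and isomorphisms away from $2$ for $i=3$. Applying this on $X\times X$ converts the $K$-theoretic decomposition of $[\O_\Delta]$ into a decomposition of $\Delta_X$ in $CH^*(X\times X)$ as a sum of external products of cycles---integral in codimensions $\le 2$ and with denominators a power of $2$ in codimension $3$. When $d\le 2$ every codimension that occurs is $\le 2$, the decomposition is integral, and we conclude at once: $M(X)$ splits as a sum of Lefschetz motives and all $CH^*(X_L)$ are free. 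This settles case (i).

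The one remaining, and genuinely harder, point is the $2$-primary part in codimension $3$ when $d=3$, that is, the possible $2$-torsion in the top Chow group $CH_0(X)=CH^3(X)$ and hence in $CH^3(X\times X)$. The decomposition already obtained in lower codimension makes $CH_0$ universally trivial modulo such $2$-torsion, so the only obstruction to integrality of the codimension-$3$ diagonal class is a tightly constrained $2$-primary group. I would eliminate it through the relation between torsion in the Chow groups of a threefold and étale (unramified) cohomology with $\Z/2$-coefficients, which is well behaved---via Bloch--Ogus together with the Merkurjev--Suslin and Bloch--Kato theory---precisely when $\operatorname{char}k\neq 2$. This is where the characteristic hypothesis is forced upon us and is the main obstacle of the argument; once the codimension-$3$ class is shown to be integral, the diagonal is decomposed over $\Z$ and $M(X)$ is of Lefschetz type, settling case (ii).
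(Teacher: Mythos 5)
Your case (i) is essentially correct, and it is a genuinely different route from the paper's (the paper treats curves and surfaces via unimodularity lemmas for the pairing $\tau$ on the filtered ring $K_0$, an explicit idempotent, and the vanishing criterion of Remark~\ref{rmk:Chowvan}, rather than by transporting a $K$-theoretic decomposition of the diagonal). But in case (ii), which the paper itself identifies as the essential one, your argument has two genuine gaps. The first is that the transport step puts the obstruction in the wrong group. Expanding $\sum_i p_1^*a_i\cdot p_2^*b_i=[\cO_\Delta]$ along the coniveau filtration of $K_0(X\times X)$ and lifting graded pieces through the maps $\varphi_i$ of Proposition~\ref{prop:surjvarphi} yields $\Delta_X=\sum_j\alpha_j\times\beta_j+e$, where the error term $e$ lies in $\Ker(\varphi_3)\subset CH^3(X\times X)_2$: this is $2$-torsion in the codimension-$3$ Chow group of the \emph{sixfold} $X\times X$. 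Your assertion that the obstruction is torsion in $CH^3(X)$ ``and hence in $CH^3(X\times X)$'' is unjustified--torsion of $CH^3$ of a product is not controlled by torsion of $CH^3$ of the factors, and the threefold technology you invoke says nothing about $X\times X$. Moreover $\sum_j\alpha_j\times\beta_j$ need not be an idempotent, so you cannot split off a complementary motive from it; and you cannot first use the (obstructed) decomposition to derive freeness of $CH^1(X_L)$, $CH^2(X_L)$ and a degree-one zero-cycle, since that would be circular. The paper avoids all of this by never decomposing $\Delta_X$ directly: it proves freeness, unimodularity of the intersection pairing $CH^1\otimes CH^2\to\Z$, existence of a degree-one zero-cycle, and invariance under extension of scalars as an independent intermediate result (Proposition~\ref{prop:key}, resting on Lemma~\ref{lemma:unimK}, the filtration lemmas, and Proposition~\ref{prop:galkin}), builds from these an explicit idempotent $\pi$ (Corollary~\ref{cor:split}), and then kills the complementary summand $N$ by showing all Chow groups of $N_L$ vanish for every $L$ and invoking Remark~\ref{rmk:Chowvan}. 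This reduces the whole problem to $2$-torsion in $CH^3(X_L)$, a group attached to the threefold itself.

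The second and more serious gap is that the vanishing of this $2$-torsion--the heart of the proof--is not argued at all: ``Bloch--Ogus together with Merkurjev--Suslin and Bloch--Kato'' names tools, not a mechanism, and no purely formal argument can exist, because in dimension six the implication fails (the Bernardara--Tabuada Pfister quadric has a unit-type $K$-motive and a non-Lefschetz Chow motive), so the proof must use $d=3$ and the unit-type hypothesis in an essential, non-formal way. What the paper actually does (Section~\ref{sec:absence}) is: the torsion of $CH^3(X)$ equals $\Ker(\varphi_3)$, which is the image of the Brown--Gersten differential $d_2\colon H^1(X,\cK_2)\to CH^3(X)$, and this differential factors through $H^1(X,\cK_2)/2$; the key claim is that the product map $\mu\colon k^*/2\otimes CH^1(X)/2\to H^1(X,\cK_2)/2$ is an isomorphism, whence $d_2=0$ by multiplicativity of the Brown--Gersten spectral sequence (its differentials vanish on $k^*$ and on $CH^1(X)$), so $CH^3(X)$ is torsion-free (Corollary~\ref{cor:notors}). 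Proving that $\mu$ is an isomorphism is where all the work and both hypotheses enter: the Merkurjev--Suslin exact sequence embeds $H^1(X,\cK_2)/2$ into $NH^3_{\acute e t}(X,\Z/2)$ (this needs the characteristic of $k$ to be different from $2$), and one computes $NH^3_{\acute e t}(X,\Z/2)\simeq k^*/2\otimes CH^1(X)/2$ (Proposition~\ref{prop:prod}) via the Hochschild--Serre spectral sequence; that computation in turn requires $H^1_{\acute e t}(X_{k^{sep}},\Z/2)=H^3_{\acute e t}(X_{k^{sep}},\Z/2)=0$ and $CH^1(X)/2\simeq H^2_{\acute e t}(X_{k^{sep}},\Z/2)$, which are extracted from the unit-type hypothesis through \'etale/topological $K$-theory and the degeneration of the Atiyah--Hirzebruch spectral sequence for threefolds, plus an unramified-cohomology argument (using the degree-one zero-cycle again) to split off $H^3(k,\Z/2)$. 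None of this mechanism appears in your proposal, so the codimension-$3$ torsion--the one point you correctly single out as the main obstacle--remains unaddressed.
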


The next sections of the paper consist in the proof of Theorem~\ref{thm:main}.

\medskip

Let us say that a smooth projective variety $V$ admits an {\it exceptional collection of expected length} if $V$ has an exceptional collection $E_1,\ldots,E_n$ such that for any field extension $k\subset L$, the classes of $(E_1)_L,\ldots,(E_n)_L$ generate (freely) the group $K_0(V_L)$.

One often considers the Euler pairing on the group $K_0(V)$, which is defined by the formula
$$
\chi\;:\; K_0(V)\otimes K_0(V) \longrightarrow \Z\,,\qquad \chi([E],[F]):=\sum_{i\geqslant 0}\dim\Ext^i(E,F)=\chi\big(V,{\mathcal H}om(E,F)\big)\,,
$$
where $[E]$ and $[F]$ are classes in $K_0(V)$ of vectors bundles $E$ and $F$ on $V$, respectively, and~$\chi(V,G)$ denotes the Euler characteristic of a vector bundle~$G$ on~$V$. An advantage of the Euler pairing is that it has a categorical meaning being defined in terms of the derived category of coherent sheaves on $V$ only. That is, derived equivalent varieties have the same Euler pairing. However the Euler pairing is neither symmetric, nor antisymmetric.

We will also use the following symmetric pairing:
\begin{equation}\label{eq:inter-pair}
\tau\;:\; K_0(V)\otimes K_0(V) \longrightarrow \Z\,,\qquad \tau([E],[F]):=\chi(V,E\otimes F)\,.
\end{equation}
Notice that the pairing $\tau$ does depend on the choice of the variety $V$ and is not well-defined for the derived category of coherent sheaves on $V$. That is, the pairing $\tau$ does not stay invariant under derived equivalences.

\medskip

Theorem~\ref{thm:main} implies the following result.

\begin{theo}\label{thm:main2}
Let $X$ be a smooth projective variety of dimension $d$ over a field $k$. Suppose that $X$ admits an exceptional collection of expected length and one of the following conditions is satisfied:
\begin{itemize}
\item[(i)]
we have $d\leqslant 2$;
\item[(ii)]
we have $d=3$ and the characteristic of $k$ is not $2$.
\end{itemize}
Then the Chow motive $M(X)$ is of Lefschetz type.
\end{theo}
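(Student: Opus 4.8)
The plan is to derive Theorem~\ref{thm:main2} from Theorem~\ref{thm:main} by proving that an exceptional collection of expected length forces the $K$-motive $KM(X)$ to be of unit type; then Theorem~\ref{thm:main} applies verbatim under the stated hypotheses on $d$ and $\operatorname{char}(k)$. Let $E_1,\ldots,E_n$ be such a collection. Since $\Hom_{\KM(k)}(\uno,KM(X))=K_0(\Spec(k)\times X)=K_0(X)$, the classes $[E_i]$ define a morphism $\alpha\colon\uno^{\oplus n}\to KM(X)$; symmetrically, classes $[F_1],\ldots,[F_n]\in K_0(X)=\Hom_{\KM(k)}(KM(X),\uno)$ define $\beta\colon KM(X)\to\uno^{\oplus n}$. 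I would choose the $[F_i]$ so that $\alpha$ and $\beta$ become mutually inverse.

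First I would compute $\beta\circ\alpha\in\End(\uno^{\oplus n})=\Mat_n(\Z)$. Unwinding the composition of correspondences in $\KM(k)$ with $V_1=V_3=\Spec(k)$, $V_2=X$, its $(i,j)$-entry equals $\chi(X,E_j\otimes F_i)=\tau([E_j],[F_i])$, with $\tau$ the pairing~\eqref{eq:inter-pair}. Hence it suffices to find $\{[F_i]\}$ dual to $\{[E_i]\}$ for $\tau$, and for this I would verify that $\tau$ is a perfect pairing on $K_0(X)$. Indeed, for an exceptional collection the Euler matrix $\big(\chi([E_i],[E_j])\big)$ is upper unitriangular, hence unimodular; since $\chi([E],[F])=\tau([E^\vee],[F])$ and the duality $[E]\mapsto[E^\vee]$ is a $\Z$-linear automorphism of $K_0(X)$, the Gram matrix of $\tau$ in the basis $\{[E_i]\}$ is unimodular too. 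A $\tau$-dual basis $\{[F_i]\}$ therefore exists and gives $\beta\circ\alpha=\id$, so that $\alpha$ is a split monomorphism; it remains to upgrade this to an isomorphism.

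For the reverse composition, unwinding as above with $V_1=V_3=X$ and $V_2=\Spec(k)$ yields $\alpha\circ\beta=\sum_i[F_i\boxtimes E_i]\in K_0(X\times X)=\End_{\KM(k)}(KM(X))$, while $\id_{KM(X)}=[\cO_\Delta]$; so I must prove $\sum_i[F_i\boxtimes E_i]=[\cO_\Delta]$. Both sides pair identically against external products: from $\tau_{X\times X}([A\boxtimes B],[C\boxtimes D])=\tau_X([A],[C])\,\tau_X([B],[D])$, from $\tau_{X\times X}([\cO_\Delta],[A\boxtimes B])=\chi(X,A\otimes B)=\tau_X([A],[B])$, and from the fact that $\tau_X([F_i],[A])$ is the coefficient of $[E_i]$ in $[A]$, one checks that both $\sum_i[F_i\boxtimes E_i]$ and $[\cO_\Delta]$ pair to $\tau_X([A],[B])$ against every $[A\boxtimes B]$. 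Consequently the desired equality will follow once the external products $\{[E_i\boxtimes E_j]\}$ are known to \emph{generate} $K_0(X\times X)$: together with the Künneth Gram matrix being unimodular, generation forces these products to be a free basis and $\tau_{X\times X}$ to be non-degenerate, whence two classes with equal pairings against them must coincide.

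The main obstacle is precisely this generation statement, i.e.\ surjectivity of the external product map $K_0(X)\otimes K_0(X)\to K_0(X\times X)$, and it is here that the \emph{expected length} hypothesis is essential rather than mere fullness over $k$. I would establish it by spreading out from the generic point combined with noetherian induction on the first factor. Working with $G$-theory, which agrees with $K_0$ on the smooth scheme $X\times X$ and enjoys localization, the restriction of a given class $\xi\in K_0(X\times X)$ to the generic fibre $X_{k(X)}$ of the first projection is, by the expected-length hypothesis applied to $L=k(X)$, a $\Z$-combination $\sum_i a_i\,(E_i)_{k(X)}$; by continuity the difference $\xi-\sum_i a_i\,[p_2^*E_i]$ then restricts to zero over some dense open $U\subseteq X$, so it is supported on $(X\setminus U)\times X$ and lies in the image of $G_0\big((X\setminus U)\times X\big)$ under the localization sequence. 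Since $\dim(X\setminus U)<\dim X$ and the residue fields of its strata are again extensions of $k$, the inductive hypothesis applies and, via the projection formula, expresses this contribution through external products as well. Making this localization and induction precise — in particular running the induction so as to obtain surjectivity onto all of $K_0(X\times X)$ — is the technical heart of the argument; the pairing identities above are then purely formal.
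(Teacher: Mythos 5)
Your proposal is correct, and its first half coincides with the paper's own proof of Theorem~\ref{thm:main2}: both deduce unimodularity of $\tau$ from the unitriangular Euler matrix via the duality $[E]\mapsto[E^{\vee}]$, and both use a $\tau$-dual system to produce the split monomorphism $\uno^{\oplus n}\to KM(X)$ (the paper packages this as the orthogonal idempotents $\pi_i=p_1^*x_i\otimes p_2^*y_i$). The genuine difference is the endgame. The paper writes $KM(X)\simeq\uno^{\oplus r}\oplus Q$, notes that the expected-length hypothesis forces $K_0(Q_L)=0$ for every field extension $k\subset L$, and kills $Q$ by invoking the argument of~\cite[Lem.\,5.3]{GO}, an abstract vanishing criterion for $K$-motives with universally trivial $K_0$; you instead prove $\alpha\circ\beta=[\cO_{\Delta}]$ directly, by (1) showing that the exterior products $[E_i\boxtimes E_j]$ generate $K_0(X\times X)$ --- restriction to the generic fibre of $p_1$, continuity and localization in $G$-theory, noetherian induction on the first factor, with the expected-length hypothesis applied over $k(X)$ and over the residue fields of smaller strata --- and (2) a formal comparison of $\tau_{X\times X}$-pairings against this generating set, on which the Gram matrix is the Kronecker square of that of $\tau_X$, hence unimodular. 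What each route buys: the paper's is shorter because the hardest step is outsourced to the cited lemma; yours is self-contained and yields a K\"unneth-type surjectivity statement of independent interest --- and since the proof of \cite[Lem.\,5.3]{GO} runs on exactly the localization/generic-point mechanism you sketch, the two arguments are at bottom cousins, with yours inlining what the paper cites. Two points to nail down in a write-up: formulate the induction for $G_0(T\times X)$ with $T$ an arbitrary (possibly singular, reducible) finite-type $k$-scheme of dimension smaller than $\dim X$, so that the d\'evissage and projection-formula steps make sense; and run the continuity argument over nonempty \emph{affine} opens $U\subseteq X$, so that the transition maps $U'\times X\to U\times X$ are affine and flat, as Quillen's limit theorem for $G$-theory requires.
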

\begin{proof}
Let us show that the $K$-motive of $X$ is of unit type. For this there are several arguments known to experts and we provide one of them for the sake of completeness (another approaches are, for example, to use a resolution of the structure sheaf of the diagonal on $X\times X$ or, more generally, to use that semi-orthogonal decompositions lead to decompositions of $K$-motives, see~\cite[Sect.\,4]{GO}).

Since $X$ admits an exceptional collection of expected length, $K_0(X)$ is a free abelian group of finite rank and the Euler pairing is unimodular. Indeed, the Euler pairing is given by an upper-triangular matrix with units on the diagonal in the basis in $K_0(X)$ given by the classes of elements in an exceptional collection of expected length. The pairing $\tau$ is obtained from the Euler pairing by applying the duality isomorphism
$$
K_0(X)\longrightarrow K_0(X)\,,\qquad [E]\longmapsto [E^{\vee}]\,,
$$
to the first argument. Therefore the pairing $\tau$ is unimodular as well.

Now let $x_1,\ldots,x_r$ be a basis in $K_0(X)$ and let $y_1,\ldots,y_r$ be the dual basis with respect to the symmetric pairing $\tau$. For each $i$, $1\leqslant i\leqslant n$, define an element $\pi_i:=p_1^*x_i\otimes p_2^*y_i$ in~$K_0(X\times X)$, where $p_1,p_2\colon X\times X\to X$ are the natural projections. One checks easily that $\pi_i$ are orthogonal idempotents in the ring $K_0(X\times X)=\End_{\KM(k)}\big(KM(X)\big)$. Furthermore, the decomposition of identity into a sum of orthogonal idempotents
$$
\mbox{$1=\sum\limits_{i=1}^n\pi_i+\left(1-\sum\limits_{i=1}^n\pi_i\right)$}
$$
defines the decomposition of the $K$-motive $KM(X)$ into a sum of a $K$-motive of unit type and a rest \mbox{$K$-motive}~$Q$:
$$
KM(X)\simeq \uno^{\oplus r}\oplus Q\,.
$$
This decomposition is compatible with scalar extensions with respect to extensions of the field~$k$. Hence, by the definition of an exceptional collection of expected length, for any field extension~$k\subset L$, we have $K_0(Q_L)=0$. One shows that $Q=0$ using the same argument as in the proof of~\cite[Lem.\,5.3]{GO}. Thus $KM(X)$ is of unit type and we apply Theorem~\ref{thm:main}.
\end{proof}

\section{Rational Chow motives of Lefschetz type}\label{sect:rat}

By $\CH(k)_{\bbQ}$ denote the $\bbQ$-linear category of rational Chow motives and by $M(V)_{\bbQ}$ denote the rational Chow motive of a smooth projective variety~$V$. We will use the following almost evident facts on rational Chow motives of Lefschetz type.

\begin{lemma}\label{lem:Chowrat}
Let $V$ be an irreducible smooth projective variety of dimension $d$ such that the rational Chow motive $M(V)_{\bbQ}$ in $\CH(k)_{\bbQ}$ is of Lefschetz type. Then the following holds true:
\begin{itemize}
\item[(i)]
for each $i$, $0\leqslant i\leqslant d$, the intersection pairing $CH^i(V)_{\bbQ}\otimes CH^{d-i}(V)_{\bbQ}\to\bbQ$ is non-degenerate; in particular, $CH^i(V)_{\bbQ}$ and $CH^{d-i}(V)_{\bbQ}$ have the same (finite) dimension over $\bbQ$;
\item[(ii)]
for any field extension $k\subset L$ and for each $i$, $0\leqslant i\leqslant d$, the natural homomorphism $CH^i(V)_{\bbQ}\to CH^i(V_L)_{\bbQ}$ is an isomorphism;
\item[(iii)]
for any field extension $k\subset L$, the variety $V_L$ over $L$ is irreducible.
\end{itemize}
\end{lemma}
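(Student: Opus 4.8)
The plan is to reduce every part to a single computation, namely $\Hom_{\CH(k)_{\bbQ}}(\Le^i,\Le^j)=\bbQ$ for $i=j$ and $0$ otherwise, together with the identification $CH^i(V)_{\bbQ}=\Hom_{\CH(k)_{\bbQ}}(\Le^i,M(V)_{\bbQ})$. Writing $M(V)_{\bbQ}\simeq\bigoplus_j\Le^{n_j}$ for the assumed Lefschetz decomposition, these two facts immediately give $CH^i(V)_{\bbQ}\simeq\bbQ^{m_i}$, where $m_i$ is the number of indices $j$ with $n_j=i$; in particular each $CH^i(V)_{\bbQ}$ is finite-dimensional, which is part of the assertion in (i).

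For (i) I would use Poincaré duality for Chow motives, $M(V)_{\bbQ}^{\vee}\simeq M(V)_{\bbQ}\otimes\Le^{-d}$, which follows from rigidity of the category of Chow motives and is checked directly on $\PP^n$. Dualizing a morphism $\Le^{d-i}\to M(V)_{\bbQ}$ and applying this identification yields a natural isomorphism $CH^{d-i}(V)_{\bbQ}\simeq\Hom_{\CH(k)_{\bbQ}}(M(V)_{\bbQ},\Le^i)$. Under it, the intersection pairing $CH^i(V)_{\bbQ}\otimes CH^{d-i}(V)_{\bbQ}\to\bbQ$ becomes the composition pairing $\Hom(\Le^i,M(V)_{\bbQ})\otimes\Hom(M(V)_{\bbQ},\Le^i)\to\End(\Le^i)=\bbQ$, $(\psi,\phi)\mapsto\phi\circ\psi$. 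In the bases of $M(V)_{\bbQ}\simeq\bigoplus_j\Le^{n_j}$ given by the inclusions of, and projections onto, the summands $\Le^{n_j}$ with $n_j=i$, this composition pairing is the identity matrix, hence perfect; non-degeneracy then forces $\dim_{\bbQ}CH^i(V)_{\bbQ}=\dim_{\bbQ}CH^{d-i}(V)_{\bbQ}$, i.e. $m_i=m_{d-i}$.

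For (ii) I would invoke the extension-of-scalars functor $\CH(k)_{\bbQ}\to\CH(L)_{\bbQ}$, which is symmetric monoidal, sends $\Le$ to $\Le$ and $M(V)_{\bbQ}$ to $M(V_L)_{\bbQ}$, and induces on Hom-groups precisely the pullback $CH^i(V)_{\bbQ}\to CH^i(V_L)_{\bbQ}$. Since $\Hom(\Le^i,\Le^j)=\bbQ\,\delta_{ij}$ over both $k$ and $L$, and the functor carries the identity of $\Le^i$ to the identity, applying it summand-by-summand to $M(V)_{\bbQ}\simeq\bigoplus_j\Le^{n_j}$ shows that the pullback is an isomorphism. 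Finally, (iii) follows from (ii): as $V$ is smooth, so is $V_L$, hence $V_L$ is the disjoint union of its irreducible components, all of dimension $d$, and their number equals $\dim_{\bbQ}CH^0(V_L)_{\bbQ}$; by (ii) this equals $\dim_{\bbQ}CH^0(V)_{\bbQ}=1$ because $V$ is irreducible, so $V_L$ is irreducible.

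The only non-formal point, and the step I would treat most carefully, is the compatibility used in (i): that under the isomorphism $CH^{d-i}(V)_{\bbQ}\simeq\Hom(M(V)_{\bbQ},\Le^i)$ coming from motivic Poincaré duality, the geometric intersection-and-degree pairing agrees with the categorical composition pairing. This is the standard statement that the canonical self-duality of $M(V)$ realizes Poincaré duality on Chow groups; once it is in place, both the non-degeneracy and the equality of dimensions are immediate from the Lefschetz decomposition, and parts (ii) and (iii) are then essentially formal.
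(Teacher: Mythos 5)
Your proposal is correct and takes essentially the same approach as the paper: part (i) via motivic Poincar\'e duality $M(V)^{\vee}_{\bbQ}\simeq M(V)_{\bbQ}\otimes\Le^{-d}$ and the resulting identification of the intersection pairing with the categorical composition pairing (which the paper delegates to \cite[Lem.\,2.1]{GO}), part (ii) via the scalar-extension functor on Chow motives applied summand-by-summand to the Lefschetz decomposition, and part (iii) from (ii) with $i=0$ by counting irreducible components through $CH^0$. You merely spell out the details that the paper compresses into a citation and two one-line remarks.
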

\begin{proof}
Part~$(i)$ follows from~\cite[Lem.\,2.1]{GO}, where one uses the duality $M(V)^{\vee}_{\bbQ}\simeq M(V)_{\bbQ}\otimes\Le^{-d}$. To show~$(ii)$ one uses that scalar extension is well-defined for Chow motives. Finally,~$(iii)$ is implied by~$(ii)$ with $i=0$, because $CH^0(V_L)_{\bbQ}$ is the $\bbQ$-vector space generated by irreducible components of~$V_L$ and $CH^0(V)_{\bbQ}\simeq \bbQ$ because $V$ is irreducible.
\end{proof}

\medskip

By $\KM(k)_{\bbQ}$ denote the $\bbQ$-linear category of rational $K$-motives and by $KM(V)_{\bbQ}$ denote the rational $K$-motive of a smooth projective variety~$V$. The categories $\CH(k)_{\bbQ}$ and $\KM(k)_{\bbQ}$ are related as follows.

Let $\widetilde{\CH}(k)_{\bbQ}$ be the symmetric monoidal category, where objects are the same as in $\CH(k)_{\bbQ}$ and morphisms are defined by the formula
$$
{\Hom}_{\widetilde{\CH}(k)_{\bbQ}}(M,N):=\bigoplus\limits_{i\in\Z}\,\Hom_{\CH(k)_{\bbQ}}(M,N\otimes \Le^i)
$$
for all rational Chow motives $M$ and $N$ (we do not consider the grading on the right hand side), cf.~\cite[\S\,7]{Tabmot}. Then one has an equivalence of symmetric monoidal categories
\begin{equation}\label{eq:equivOrlov}
{\KM(k)_{\bbQ}\stackrel{\sim}\longrightarrow \widetilde{\CH}(k)_{\bbQ}}\,,
\end{equation}
which was constructed essentially by Orlov in~\cite{Ormot}. For any smooth projective variety $V$,
the equivalence sends its rational $K$-motive $KM(V)_{\bbQ}$ to its rational Chow motive $M(V)_{\bbQ}$. Given irreducible smooth projective varieties $V_1$ and $V_2$, the equivalence gives the map on morphisms
\begin{multline*}
\Hom_{\KM(k)_{\bbQ}}\big(KM(V_1)_{\bbQ},KM(V_2)_{\bbQ}\big)=K_0(V_1\times V_2)_{\bbQ}\stackrel{\sim}\longrightarrow
 \\
\stackrel{\sim}\longrightarrow
\bigoplus\limits_{i\in\Z}CH^i(V_1\times V_2)_{\bbQ}=\Hom_{\widetilde{\CH}(k)_{\bbQ}}\big(M(V_1)_{\bbQ},M(V_2)_{\bbQ}\big)
\end{multline*}
defined by the formula
\begin{equation}\label{eq:Orlov}
\alpha\longmapsto p_1^*\sqrt{{\rm Td}_{V_1}}\cdot {\rm ch}(\alpha)\cdot p_2^*\sqrt{{\rm Td}_{V_2}}\,,
\end{equation}
where $p_1\colon V_1\times V_2\to V_1$, $p_2\colon V_1\times V_2\to V_2$ are the natural projections, $\rm ch(\alpha)$ is the Chern character of~$\alpha$, and ${\rm Td}_{V_1}$, ${\rm Td}_{V_2}$ are the Todd classes of $V_1$, $V_2$, respectively. Grothendieck--Riemann--Roch theorem implies that this definition is correct, that is, respects compositions of morphisms. (Alternatively, following Tabuada in~\cite[\S\,8]{Tabmot}, one can send~$\alpha$ to~${{\rm ch}(\alpha)\cdot p_2^*\,{\rm Td}_{V_2}}$ instead of the right hand side of formula~\eqref{eq:Orlov}, or, more generally, one can send~$\alpha$ to~${(p_1^*\,{\rm Td}_{V_1})^u\cdot{\rm ch}(\alpha)\cdot (p_2^*\,{\rm Td}_{V_2})^{1-u}}$ for any $u\in\bbQ$.)

\medskip

The following result is proved in the context of non-commutative motives in~\cite[Theor.\,2.1]{BT}.

\begin{prop}\label{prop:known}
For any smooth projective variety $V$, the rational \mbox{$K$-motive}~$KM(V)_{\bbQ}$  in~$\KM(k)_{\bbQ}$ is of unit type if and only if the rational Chow motive~$M(V)_{\bbQ}$ in~$\CH(k)_{\bbQ}$ is of Lefschetz type.
\end{prop}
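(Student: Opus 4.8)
The plan is to deduce the proposition from the monoidal equivalence~\eqref{eq:equivOrlov}, which identifies $KM(V)_{\bbQ}$ with $M(V)_{\bbQ}$ and the unit $\uno$ of $\KM(k)_{\bbQ}$ with the unit $\uno$ of $\widetilde{\CH}(k)_{\bbQ}$. Since a rational $K$-motive is of unit type precisely when it is isomorphic to $\uno^{\oplus n}$, the statement reduces to the purely motivic claim that $M(V)_{\bbQ}$ is of Lefschetz type in $\CH(k)_{\bbQ}$ if and only if it becomes isomorphic to $\uno^{\oplus n}$ in $\widetilde{\CH}(k)_{\bbQ}$. The main tool will be the evident $\Z$-grading on the morphism spaces of $\widetilde{\CH}(k)_{\bbQ}$: an element of $\Hom_{\widetilde{\CH}(k)_{\bbQ}}(M,N)=\bigoplus_i\Hom_{\CH(k)_{\bbQ}}(M,N\otimes\Le^i)$ decomposes into homogeneous components, and composition is additive in degree, a degree-$i$ map $a$ and a degree-$j$ map $b$ composing via $(b\otimes\id_{\Le^i})\circ a$ into degree $i+j$. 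Because $\Hom_{\CH(k)_{\bbQ}}(\Le^i,\Le^j)=\bbQ$ for $i=j$ and $0$ otherwise, the degree-$0$ part is the only surviving self-pairing, and in particular $\End_{\widetilde{\CH}(k)_{\bbQ}}(\uno)=\bbQ$.

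For the easy direction I would first observe that each Lefschetz motive $\Le^a$ becomes isomorphic to $\uno$ in $\widetilde{\CH}(k)_{\bbQ}$: the degree $-a$ and degree $a$ components, both given by identity morphisms in $\CH(k)_{\bbQ}$, supply mutually inverse maps $\uno\rightleftarrows\Le^a$. Consequently any Lefschetz-type motive $M(V)_{\bbQ}=\bigoplus_l\Le^{a_l}$ maps to $\uno^{\oplus n}$ in $\widetilde{\CH}(k)_{\bbQ}$, so its preimage $KM(V)_{\bbQ}$ under~\eqref{eq:equivOrlov} is of unit type.

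The converse is where I expect the real work to lie. Suppose $M:=M(V)_{\bbQ}\cong\uno^{\oplus n}$ in $\widetilde{\CH}(k)_{\bbQ}$, witnessed by mutually inverse morphisms $\phi\colon M\to\uno^{\oplus n}$ and $\psi\colon\uno^{\oplus n}\to M$, with homogeneous components $\phi_i\colon M\to(\Le^i)^{\oplus n}$ and $\psi_j$ the degree-$j$ part of $\psi$. Extracting the degree-$0$ component of $\psi\circ\phi=\id_M$ yields in $\CH(k)_{\bbQ}$ a factorization $\id_M=\sum_i(\psi_{-i}\otimes\id_{\Le^i})\circ\phi_i$ in which each summand factors through the Lefschetz-type motive $(\Le^i)^{\oplus n}$. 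Assembling these over the finite set of nonzero components exhibits $\id_M$ as a composite through a single Lefschetz-type motive $T=\bigoplus_i(\Le^i)^{\oplus n}$, so $M$ is a direct summand of $T$ in the pseudo-abelian category $\CH(k)_{\bbQ}$. It then remains to verify that a direct summand of a Lefschetz-type motive is again of Lefschetz type, and here the computation $\Hom_{\CH(k)_{\bbQ}}(\Le^i,\Le^j)=\bbQ$ for $i=j$ and $0$ otherwise is decisive: it gives $\End_{\CH(k)_{\bbQ}}(T)\cong\prod_i\Mat_{m_i}(\bbQ)$, a semisimple $\bbQ$-algebra whose idempotents are, up to conjugation, coordinate projections cutting out subsums of the $\Le^i$. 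Thus every summand of $T$, and in particular $M$, is a direct sum of Lefschetz motives, which completes the converse and hence the proposition. The only points requiring care are the homogeneity bookkeeping in the degree-$0$ extraction and this last closure property of Lefschetz-type motives under passing to direct summands.
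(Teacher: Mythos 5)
Your proposal is correct and takes essentially the same route as the paper: both pass through the monoidal equivalence~\eqref{eq:equivOrlov} and reduce the statement to the claim that $M(V)_{\bbQ}$ is of Lefschetz type in $\CH(k)_{\bbQ}$ if and only if it is isomorphic to $\uno^{\oplus n}$ in $\widetilde{\CH}(k)_{\bbQ}$. The paper dismisses that claim with ``one checks easily,'' while you carry out the verification in full (the degree bookkeeping, the extraction of the degree-$0$ part of $\psi\circ\phi=\id_M$ exhibiting $M(V)_{\bbQ}$ as a retract of a Lefschetz-type motive, and the closure of Lefschetz-type motives under direct summands via $\End_{\CH(k)_{\bbQ}}(T)\cong\prod_i\Mat_{m_i}(\bbQ)$), and all of these steps are sound.
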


There is a version of Proposition~\ref{prop:known} which asserts that if $V$ admits a full exceptional collection, then the rational Chow motive $M(V)_{\bbQ}$ is of Lefschetz type. For a while this had been a well-known folklore and then different proofs were proposed by  Marcolli and Tabuada in~\cite[Theor.\,1.3]{MT}, by Galkin, Katzarkov, Mellit, and Shinder in~\cite[Prop.\,2.1(1)]{GKMS}, and by Vial in~\cite[\S\,2]{Vial}. An essential part in all these proofs is the Chern character isomorphism (which reveals rationality of coefficients). Any of the proofs cited above can be easily adopted to show Proposition~\ref{prop:known}.

For the sake of completeness, we provide the following argument that proves Proposition~\ref{prop:known}, without claiming any originality. One checks easily that a rational Chow motive $M$ is of Lefschetz type, that is, there is an isomorphism $M\simeq (\Le^{i_1})^{\oplus r_1}\oplus\ldots\oplus(\Le^{i_n})^{\oplus r_n}$ in $\CH(k)_{\bbQ}$ if and only if there is an isomorphism $M\simeq\uno^{\oplus r}$ in $\widetilde{\CH}(k)_{\bbQ}$. Thus the proposition follows directly from the equivalence of categories~\eqref{eq:equivOrlov}.

\section{Splitting off a Lefschetz type motive}

In this section, we decompose the Chow motive of $X$ as in Theorem~\ref{thm:main} into a direct sum of a Chow motive of Lefschetz type and a Chow motive $N$ whose only potentially non-trivial Chow group is~$CH^3(N)$, which is necessarily $2$-torsion (and the same holds for the Chow motive~$N_L$ for any field extension $k\subset L$). We finish the proof of Theorem~\ref{thm:main} in Section~\ref{sec:absence} by showing that $N=0$.

\medskip

We will use several auxiliary results. First, we provide some elementary facts about pairings on filtered abelian groups. Let $A$ be a free finitely generated abelian group with a structure of a commutative ring and a linear map
$$
\chi\;:\; A\longrightarrow\Z
$$
such that the symmetric pairing
$$
\langle\cdot,\cdot\rangle\;:\; A\otimes A\longrightarrow \Z\,,\qquad x\otimes y\longmapsto\chi(x\cdot y)\,,
$$
is unimodular. Let
$$
A=F^0A\supset F^1A\supset\ldots\supset F^dA\supset F^{d+1}A=0
$$
be a multiplicative decreasing filtration, that is, for all $i,j\geqslant 0$, we have ${F^iA\cdot F^jA\subset F^{i+j}A}$. We assume that for each $i$, $0\leqslant i\leqslant d$, there is an equality between the ranks of adjoint quotients:
$$
\rk(\gr^i_FA)=\rk(\gr^{d-i}_FA)\,.
$$
We keep these assumptions on the ring $A$, the form $\chi$, and the filtration $F^{\bullet}A$ during all this section.

\begin{lemma}\label{lem:unimod}
Suppose the filtration $F^{\bullet}A$ splits, that is, for each $i$, $0\leqslant i\leqslant d$, the quotient~$A/F^iA$ is torsion-free. Then for each $i$, $0\leqslant i\leqslant d$, the induced pairing between free finitely generated abelian groups
$$\langle\cdot,\cdot\rangle_i\;:\; \gr_F^iA\otimes\gr^{d-i}_FA\longrightarrow \Z
$$
is unimodular.
\end{lemma}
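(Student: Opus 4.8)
The plan is to reduce the statement to a determinant computation in a basis adapted to the filtration. First I would record that the pairing $\langle\cdot,\cdot\rangle_i$ is well-defined: if $x\in F^{i+1}A$ and $y\in F^{d-i}A$, then by multiplicativity $x\cdot y\in F^{i+1}A\cdot F^{d-i}A\subseteq F^{d+1}A=0$, so $\langle x,y\rangle=\chi(x\cdot y)=0$, and symmetrically $\langle F^iA,F^{d-i+1}A\rangle=0$. Hence $\langle\cdot,\cdot\rangle$ descends to a well-defined pairing $\gr^i_FA\otimes\gr^{d-i}_FA\to\Z$.

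Since $F^{\bullet}A$ splits, each $F^iA$ is a direct summand of $A$, so I can choose a basis $e_1,\ldots,e_n$ of $A$ adapted to the filtration, meaning that for every $i$ the basis vectors lying in $F^iA$ span $F^iA$ and their images span $\gr^i_FA$. I would group the basis into blocks according to the level $i$ at which each vector first appears, the $i$-th block having $\rk(\gr^i_FA)$ elements, and let $G$ be the Gram matrix of $\langle\cdot,\cdot\rangle$ in this basis, partitioned into blocks $G_{ij}$ by level. The crucial structural observation is that $G$ is block anti-triangular: if $x$ lies in level $i$ and $y$ in level $j$ with $i+j\geqslant d+1$, then $x\cdot y\in F^{i+j}A\subseteq F^{d+1}A=0$, so $G_{ij}=0$. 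The anti-diagonal blocks $G_{i,d-i}$ are exactly the Gram matrices of the induced pairings $\langle\cdot,\cdot\rangle_i$ in the chosen bases of $\gr^i_FA$ and $\gr^{d-i}_FA$, and they are square precisely because of the rank hypothesis $\rk(\gr^i_FA)=\rk(\gr^{d-i}_FA)$.

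It then remains to compute $\det G$. Reversing the order of the column blocks (sending block column $j$ to position $d-j$) turns $G$ into a block upper-triangular matrix whose diagonal blocks are the square matrices $G_{i,d-i}$, and this column permutation multiplies the determinant only by $\pm 1$. Therefore $\det G=\pm\prod_{i=0}^{d}\det(G_{i,d-i})$. By hypothesis $\langle\cdot,\cdot\rangle$ is unimodular, so $\det G=\pm 1$, and since each factor $\det(G_{i,d-i})$ is an integer, each one must equal $\pm 1$. Thus every induced pairing $\langle\cdot,\cdot\rangle_i$ has a Gram matrix of determinant $\pm 1$ and is unimodular, as claimed.

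The only point requiring genuine care is the determinant bookkeeping for the block anti-triangular matrix: one must check that the column reversal produces genuinely square diagonal blocks, so that the determinant factors as a product of block determinants, and this is exactly where the rank equality $\rk(\gr^i_FA)=\rk(\gr^{d-i}_FA)$ is used. Everything else — well-definedness, the existence of an adapted basis from the splitting hypothesis, and the vanishing of the super-anti-diagonal blocks from multiplicativity — is routine linear algebra over $\Z$.
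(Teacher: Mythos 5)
Your proof is correct, but it is not the route the paper's own proof of Lemma~\ref{lem:unimod} takes. The paper argues by induction on $d$: unimodularity gives an isomorphism $A\stackrel{\sim}{\longrightarrow} A^{\vee}$, the splitting makes the restriction map $A^{\vee}\to (F^dA)^{\vee}$ surjective, and the orthogonality $\langle F^1A,F^dA\rangle=0$ makes this surjection factor through $\gr_F^0A$, which forces $\langle\cdot,\cdot\rangle_0$ to be unimodular (using the rank equality) and yields $(F^dA)^{\bot}=F^1A$ together with $F^1A\simeq (A/F^dA)^{\vee}$ and $F^1A/F^dA\simeq (F^1A/F^dA)^{\vee}$; one then replaces $A$ by $F^1A/F^dA$ and decreases $d$ by~$2$. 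Your Gram-matrix computation is instead precisely the alternative argument that the paper itself records in the remark immediately following the lemma, described there as ``more direct but less invariant'': choose a basis adapted to a splitting, observe that the Gram matrix is block anti-triangular with square anti-diagonal blocks $G_i$ (squareness is exactly where the rank hypothesis enters, as you correctly emphasize), and conclude $\det(G)=\pm\det(G_0)\cdots\det(G_d)$. The trade-off is as the paper suggests: your argument is shorter and purely computational, while the inductive argument avoids any choice of splitting or basis and produces along the way structural facts (such as $(F^dA)^{\bot}=F^1A$) of independent use. Note also that both arguments use nothing about the ring structure beyond the vanishing $\langle F^iA,F^jA\rangle=0$ for $i+j\geqslant d+1$, which is the added generality pointed out in part (i) of that same remark.
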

\begin{proof}
The proof is by induction on $d$. Depending on the parity of $d$, the base of the induction is either the case $A=0$ for $d$ odd, or the case $F^1A=0$ for $d$ even. In both cases, the assertion is clear.

Let us make an induction step. Since the pairing $\langle\cdot,\cdot\rangle$ is unimodular, it induces an isomorphism $A\stackrel{\sim}\longrightarrow A^{\vee}$. Since the filtration splits, the natural homomorphism
$$
\phi\;:\; A^{\vee}\longrightarrow (F^dA)^{\vee}=(\gr^d_FA)^{\vee}
$$
is surjective. The vanishing ${\langle F^1A,F^dA\rangle=0}$ implies that $\phi$ factors through the quotient
$$
A\longrightarrow A/F^1A=\gr^0_FA\,.
$$
One checks easily that the arising surjection $\psi\colon\gr_F^0A\twoheadrightarrow (\gr^d_FA)^{\vee}$ is induced by the pairing ${\langle\cdot,\cdot\rangle_0\colon\gr_F^0A\otimes\gr_F^dA\to \Z}$. Since $\gr^0_FA$ and~$\gr^d_FA$  are free finitely generated abelian groups of the same rank, we see that $\psi$ is an isomorphism, whence the pairing $\langle\cdot,\cdot\rangle_0$ is unimodular. This implies also the equality $(F^dA)^{\bot}=F^1A$. Usng this and the fact that the filtration splits, we see that the isomorphism $A\stackrel{\sim}\longrightarrow A^{\vee}$ defines the isomorphisms
$$
F^1A\stackrel{\sim}\longrightarrow (A/F^dA)^{\vee}\,,\qquad F^1A/F^dA\stackrel{\sim}\longrightarrow (F^1A/F^dA)^{\vee}\,.
$$
Thus the induced pairing $F^1A/F^dA\otimes F^1A/F^dA\to \Z$ is unimodular and we complete the induction step replacing $A$ by $F^1A/F^dA$ and decreasing $d$ by $2$.
\end{proof}

\begin{remark}
\hspace{0cm}
\begin{itemize}
\item[(i)]
Actually, Lemma~\ref{lem:unimod} and its proof do not involve the ring structure on $A$ and are valid in a more general case. Namely, one can replace the ring structure on $A$, the linear map~$\chi$, and the multiplicative property of the filtration $F^{\bullet}A$ by the following: $\langle\cdot,\cdot\rangle$ is any unimodular pairing on~$A$ and the filtration $F^{\bullet}A$ satisfies the condition $\langle F^iA,F^jA\rangle=0$ for all $i,j\geqslant 0$ with $i+j\geqslant d+1$.
\item[(ii)]
A more direct but less invariant proof of Lemma~\ref{lem:unimod} is to choose a splitting of the filtration and to choose a basis of $A$ by choosing bases of all adjoint quotients. In this basis of~$A$, the Gram matrix $G$ of the pairing $\langle\cdot,\cdot\rangle$ has the form
$$
G=\left(\begin{array}{ccccc}
* & *&\dots & * & G_0 \\
* & & & G_1& 0\\
\vdots && \iddots & & \vdots\\
* & G_{d-1} &&  & 0\\
G_d & 0 & \dots & 0&0
\end{array}\right)\,,
$$
where $G_i$ is the matrix of the pairing $\langle\cdot,\cdot\rangle_i$ for each $i$, $0\leqslant i\leqslant d$. The equality between ranks implies that each $G_i$ is a square matrix. Hence $\det(G)$ equals up to sign to the product $\det(G_0)\cdot\ldots\cdot\det(G_d)$, which proves the lemma.
\end{itemize}
\end{remark}

\begin{lemma}\label{lemma:secondkey}
Suppose that $\rk(\gr^0_FA)=1$ (and so $\rk(\gr^d_FA)=\rk(F^dA)=1$ as well) and that there is $i_0$, $0\leqslant i_0\leqslant d$, such that the quotients $A/F^{i_0}A$ and $A/F^{d-i_0}A$ are torsion-free. Then $\chi\colon F^dA\to \Z$ is an isomorphism and the quotient $A/F^dA$ is torsion-free.
\end{lemma}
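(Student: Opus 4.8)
The plan is to reduce everything to the single numerical identity $\chi(e)=\pm1$, where $e$ is a generator of the free rank-one group $F^dA$. Granting this, both assertions are immediate: the homomorphism $\chi\colon F^dA\to\Z$ sends $ne\mapsto n\chi(e)=\pm n$ and is thus an isomorphism, while $\chi(e)=\chi(1\cdot e)=\langle 1,e\rangle=\pm1$ shows that the functional $\langle -,e\rangle\colon A\to\Z$ is surjective; under the identification $A\xrightarrow{\sim}A^{\vee}$ coming from the unimodularity of $\langle\cdot,\cdot\rangle$ this means that $e$ is primitive, i.e. $F^dA$ is saturated and $A/F^dA$ is torsion-free. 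So the whole content is to produce an equality $\chi(e)=\pm1$.

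To produce it I would exploit multiplicativity at the two complementary levels $i_0$ and $d-i_0$. Since $F^{i_0}A\cdot F^{d-i_0}A\subseteq F^{d}A=\Z e$, for $x\in F^{i_0}A$ and $y\in F^{d-i_0}A$ we have $\langle x,y\rangle=\chi(xy)\in\chi(e)\,\Z$; that is, the restriction of the unimodular pairing to $F^{i_0}A\times F^{d-i_0}A$ is divisible by $\chi(e)$. It therefore suffices to prove that this restricted pairing is surjective onto $\Z$, for then $\chi(e)\mid 1$.

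For the surjectivity I would use that the torsion-freeness hypotheses make $F^{i_0}A$ and $F^{d-i_0}A$ saturated, hence direct summands of $A$. From unimodularity, the vanishing $\langle F^{i_0}A,F^{d+1-i_0}A\rangle=0$ (as $F^{i_0}A\cdot F^{d+1-i_0}A\subseteq F^{d+1}A=0$), and the rank symmetry $\rk\gr^i_FA=\rk\gr^{d-i}_FA$, one gets — exactly as in the computation $(F^dA)^{\bot}=F^1A$ in the proof of Lemma~\ref{lem:unimod} — that $(F^{i_0}A)^{\bot}$ is the saturation of $F^{d+1-i_0}A$, which lies in the saturated group $F^{d-i_0}A$. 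Consequently the adjoint map $F^{d-i_0}A\to(F^{i_0}A)^{\vee}$, $y\mapsto\langle -,y\rangle|_{F^{i_0}A}$, has kernel $(F^{i_0}A)^{\bot}$ and image the saturated subgroup $F^{d-i_0}A/(F^{i_0}A)^{\bot}$, a direct summand of the free group $(F^{i_0}A)^{\vee}$. A nonzero direct summand of a free finitely generated abelian group contains a primitive vector of the ambient group, i.e. a surjective homomorphism $F^{i_0}A\twoheadrightarrow\Z$; realizing such a homomorphism as $\langle -,y_0\rangle|_{F^{i_0}A}$ produces $x_0\in F^{i_0}A$, $y_0\in F^{d-i_0}A$ with $\langle x_0,y_0\rangle=\pm1$, whence $\chi(e)=\pm1$.

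The step I expect to be the main obstacle is the degenerate case in which the summand $F^{d-i_0}A/(F^{i_0}A)^{\bot}$ vanishes, equivalently $\rk\gr^{i_0}_FA=\rk\gr^{d-i_0}_FA=0$: there the restricted pairing is identically zero (in fact $F^{i_0}A\cdot F^{d-i_0}A=0$) and the argument above gives nothing. I would treat this case separately, by induction on $d$ after deleting the indices at which a graded piece vanishes. For the interior indices that occur when $d\leqslant 3$ this degeneracy is harmless: for $d=3$ and $i_0=1$ the two hypotheses together with $\rk\gr^1_FA=0$ force $F^1A=F^2A$, so if the inclusions in the filtration are proper the case simply does not arise, and otherwise the collapse reduces $d$.
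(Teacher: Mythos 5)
Your main argument---everything up to the degenerate case---is correct and is in substance the same as the paper's proof. The paper saturates the filtration, applies Lemma~\ref{lem:unimod} to the saturated filtration to obtain a unimodular pairing between $\gr^{i_0}_{\widetilde{F}}A$ and $\gr^{d-i_0}_{\widetilde{F}}A$, and uses the two torsion-freeness hypotheses to turn surjectivity of that pairing into surjectivity of the composite $F^{i_0}A\otimes F^{d-i_0}A\to F^dA\stackrel{\chi}\longrightarrow\Z$. Your orthogonal-complement computation manufactures exactly the same thing, a pair $x_0\in F^{i_0}A$, $y_0\in F^{d-i_0}A$ with $\langle x_0,y_0\rangle=\pm1$, and your reduction of both conclusions to $\chi(e)=\pm1$ is the paper's endgame in slightly different words.

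The genuine gap is the degenerate case $\rk\gr^{i_0}_FA=0$, which you correctly isolate but then dismiss too quickly. Your collapse argument stalls after one step: from $F^{1}A=F^{2}A$ you pass to the depth-two filtration $A\supset F^{1}A\supset F^{3}A\supset 0$, whose middle graded piece $F^{1}A/F^{3}A$ again has rank zero but now need \emph{not} vanish, because after the collapse the only torsion-freeness hypothesis available is on $A/F^{1}A$ (torsion-freeness of $A/F^{3}A$ is the conclusion, not a hypothesis). In fact no repair is possible, because the lemma is \emph{false} in the degenerate case. Take $A=\Z[x]/(x^2)$ with $\chi(a+bx):=b$, so that $\langle 1,x\rangle=1$ and $\langle 1,1\rangle=\langle x,x\rangle=0$, a unimodular pairing; take $d=3$ and
$$
F^0A=A\ \supset\ F^1A=F^2A=\Z x\ \supset\ F^3A=2\Z x\ \supset\ F^4A=0\,.
$$
This filtration is multiplicative (all products of elements of $F^{1}A$ lie in $(x^2)=0$), the ranks of the graded pieces are $1,0,0,1$, and for $i_0=1$ both $A/F^{1}A$ and $A/F^{2}A$ are isomorphic to $\Z$, hence torsion-free. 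Yet $\chi(F^3A)=2\Z$ and $A/F^3A$ has $2$-torsion. (Dropping one index gives the analogous counterexample to Remark~\ref{rmk:2d}.)

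This defect is not yours alone: the paper's own proof breaks at exactly the same point, namely where it asserts that the unimodular pairing $\gr^{i_0}_{\widetilde{F}}A\otimes\gr^{d-i_0}_{\widetilde{F}}A\to\Z$ is ``in particular surjective''---an implication that fails precisely when these groups are zero. The correct statement of the lemma carries the extra hypothesis $\rk\gr^{i_0}_FA\neq 0$ (and Corollary~\ref{cor:key} the hypothesis $\gr^1_FA\neq 0$); with it, both your argument and the paper's go through verbatim. The geometric applications are unaffected, since there $A=K_0(X)$ and $\gr^1_FK_0(X)\simeq CH^1(X)$ has positive rank: the class of an ample divisor on a smooth projective threefold (or surface) is non-torsion.
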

\begin{proof}
Modify the filtration $F^{\bullet}A$ as follows: for each $i$, $0\leqslant i\leqslant d$, let $\widetilde{F}^iA\subset A$ be the saturation of $F^iA$ in~$A$, that is, $\widetilde{F}^iA$ is the preimage of the torsion subgroup under the quotient map $A\to A/F^iA$. Clearly, the filtration $\widetilde{F}^{\bullet}A$ is multiplicative and for each $i$, $0\leqslant i\leqslant d$, there are equalities
$$
\rk(\gr^i_{\widetilde{F}}A)=\rk(\gr^i_FA)=\rk(\gr^{d-i}_FA)=\rk(\gr^{d-i}_{\widetilde{F}}A)\,.
$$
Also, by construction, the filtration $\widetilde{F}^{\bullet}A$ splits. Thus by Lemma~\ref{lem:unimod}, for each $i$, $0\leqslant i\leqslant d$, the pairing ${\widetilde{\langle\cdot,\cdot\rangle}_{i}\colon\gr^{i}_{\widetilde{F}}A\otimes \gr^{d-i}_{\widetilde{F}}A\to \Z}$
is unimodular. In particular, this map is surjective.

Since the quotients $A/F^{i_0}A$ and $A/F^{d-i_0}A$ are torsion-free, there are equalities ${F^{i_0}A=\widetilde{F}^{i_0}A}$ and ${F^{d-i_0}A=\widetilde{F}^{d-i_0}A}$. Hence the natural maps
$$
F^{i_0}A\longrightarrow \gr^{i_0}_{\widetilde{F}}A\,,\qquad F^{d-i_0}A\longrightarrow \gr^{d-i_0}_{\widetilde{F}}A
$$
are surjective. The composition of surjective maps
$$
F^{i_0}A\otimes F^{d-i_0}A\longrightarrow \gr^{i_0}_{\widetilde{F}}A\otimes \gr^{d-i_0}_{\widetilde{F}}A\stackrel{\widetilde{\langle\cdot,\cdot\rangle}_{i_0}}\longrightarrow \Z
$$
also factors as the composition
$$
F^{i_0}A\otimes F^{d-i_0}A\longrightarrow F^dA\stackrel{\chi}\longrightarrow \Z\,.
$$
This implies that the map $\chi\colon F^dA\to\Z$ is surjective. Since the rank of $F^dA$ is one and~$F^dA$ is torsion-free, we see that this map is an isomorphism. This implies directly that the quotient~$A/F^dA\simeq\Ker(\chi)$ is torsion-free.
\end{proof}

\begin{cor}\label{cor:key}
Suppose that $\gr^0_FA=A/F^1A\simeq \Z$, the adjoint quotient $\gr_F^1A$ is torsion-free, and that $d=3$.
Then $\chi\colon F^3A\to \Z$ is an isomorphism, the adjoint quotients $\gr_F^1A$ and $\gr^2_FA$ are torsion-free, and the pairing ${\langle\cdot,\cdot\rangle_1\colon \gr_F^1A\otimes\gr^2_FA\to \Z}$ is unimodular.
\end{cor}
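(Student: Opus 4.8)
The plan is to deduce the corollary by chaining Lemma~\ref{lemma:secondkey} and Lemma~\ref{lem:unimod}, after first verifying that enough of the quotients $A/F^iA$ are torsion-free. Since $d=3$ and $\gr^0_FA\simeq\Z$ has rank one, the rank-equality hypothesis gives $\rk(\gr^3_FA)=\rk(\gr^0_FA)=1$, and as $F^4A=0$ we have $\gr^3_FA=F^3A$, so $\rk(F^3A)=1$; this is the parenthetical remark in the statement.

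First I would check the hypotheses of Lemma~\ref{lemma:secondkey} with $i_0=1$, so that $d-i_0=2$. The quotient $A/F^1A=\gr^0_FA\simeq\Z$ is free, hence torsion-free. For $A/F^2A$ I would use the short exact sequence
$$
0\longrightarrow \gr^1_FA\longrightarrow A/F^2A\longrightarrow \gr^0_FA\longrightarrow 0\,.
$$
Here $\gr^1_FA$ is torsion-free by hypothesis and $\gr^0_FA\simeq\Z$ is free, so this extension of torsion-free groups is itself torsion-free (a torsion element would map to a torsion element of $\gr^0_FA$, hence to $0$, and thus lie in the torsion-free subgroup $\gr^1_FA$). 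Together with $\rk(\gr^0_FA)=1$, Lemma~\ref{lemma:secondkey} then yields that $\chi\colon F^3A\to\Z$ is an isomorphism and that $A/F^3A$ is torsion-free.

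At this point the quotients $A/F^iA$ are torsion-free for $i=0,1,2,3$, so the filtration $F^{\bullet}A$ splits in the sense of Lemma~\ref{lem:unimod}. Applying that lemma gives that each induced pairing $\langle\cdot,\cdot\rangle_i\colon\gr^i_FA\otimes\gr^{d-i}_FA\to\Z$ is unimodular; taking $i=1$ produces the desired unimodularity of $\langle\cdot,\cdot\rangle_1\colon\gr^1_FA\otimes\gr^2_FA\to\Z$. Finally, $\gr^1_FA$ is torsion-free by hypothesis, while $\gr^2_FA=F^2A/F^3A$ embeds as a subgroup of the torsion-free group $A/F^3A$, hence is torsion-free as well.

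The computation is a routine assembly of the two lemmas, so there is no serious obstacle; the only point requiring care is the verification that $A/F^2A$ is torsion-free, which genuinely relies on $\gr^0_FA\simeq\Z$ being free so that the extension above introduces no new torsion. Choosing $i_0=1$ (rather than, say, $i_0=0$) is what makes both torsion-freeness hypotheses of Lemma~\ref{lemma:secondkey} available from the single assumption that $\gr^1_FA$ is torsion-free.
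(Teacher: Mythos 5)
Your proof is correct and follows essentially the same route as the paper: verify that $A/F^2A$ is torsion-free as an extension of $\gr^0_FA\simeq\Z$ by the torsion-free group $\gr^1_FA$, apply Lemma~\ref{lemma:secondkey} with $i_0=1$ to get the isomorphism $\chi\colon F^3A\to\Z$ and torsion-freeness of $A/F^3A$, and then conclude via Lemma~\ref{lem:unimod} once the filtration is known to split. The only cosmetic remark is that your closing comment slightly overstates what is needed for the extension step --- torsion-freeness of $\gr^0_FA$ suffices there, freeness is not required --- but this does not affect the argument.
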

\begin{proof}
The quotient $A/F^{2}A$ is torsion-free being an extension of $\gr^0_FA\simeq \Z$ by the torsion-free group $\gr^1_FA$. Hence by Lemma~\ref{lemma:secondkey} with $i_0=1$, we have that $\chi\colon F^3A\to \Z$ is an isomorphism and the quotient~$A/F^3A$ is torsion-free. Thus the filtration $F^{\bullet}A$ splits and we conclude the proof using Lemma~\ref{lem:unimod}.
\end{proof}

\begin{remark}\label{rmk:2d}
Suppose that $\gr^0_FA=A/F^1A\simeq \Z$ and $d=2$. A similar argument as in the proof of Corollary~\ref{cor:key} implies that $\chi\colon F^2A\to \Z$ is an isomorphism, the adjoint quotient $\gr^1_FA$ is torsion-free, and the pairing ${\langle\cdot,\cdot\rangle_1\colon \gr_F^1A\otimes\gr^1_FA\to \Z}$ is unimodular.
\end{remark}

\medskip

Let $V$ be a smooth projective variety of dimension~$d$. We apply the above results with~$A$ being the ring $K_0(V)$ and $\chi$ being the Euler characteristic, so that $\langle\cdot,\cdot\rangle$ is the pairing $\tau$ (see formula~\eqref{eq:inter-pair}). Let $F^iK_0(V)$, $i\geqslant 0$, be the filtration on $K_0(V)$ by codimension of support, that is, $F^iK_0(V)$ is generated by classes of coherent sheaves whose support has codimension at least~$i$. Recall the following important facts, see, e.g.,~\cite[Exp.\,0]{BGI} or \cite[Ex.\,15.1.5, Ex.\,15.3.6]{Ful}.

\begin{prop}\label{prop:surjvarphi}
For each $i\geqslant 0$, there is a surjective homomorphism
$$
\varphi_i\;:\; CH^i(V)\twoheadrightarrow\gr_F^iK_0(V)
$$
that sends the class of an irreducible subvariety $Z\subset V$ of codimension $i$ to the class of its structure sheaf~$\cO_Z$. The homomorphism $\varphi_i$ commutes in a natural sense with scalar extensions with respect to extensions of the field $k$. The kernel of the homomorphism~$\varphi_i$ is contained in~$(i-1)!$-torsion of~$CH^i(V)$. In particular, $\varphi_i$ is an isomorphism for $i=0,1,2$ and the kernel of $\varphi_3$ is contained in \mbox{$2$-torsion} of $CH^3(V)$.
\end{prop}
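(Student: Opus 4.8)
The plan is to construct $\varphi_i$ by dévissage, to build a one-sided inverse out of Chern classes, and to read off the kernel from the resulting scalar identity. First I would construct the map. By definition $F^iK_0(V)$ is generated by classes $[\cO_{\cF}]$ of coherent sheaves $\cF$ with $\codim\Supp(\cF)\geqslant i$. Given such an $\cF$, let $Z_1,\ldots,Z_r$ be the irreducible components of $\Supp(\cF)$ of codimension exactly $i$; localizing at the generic point $\eta_j$ of each $Z_j$ and using that $\cF_{\eta_j}$ is a finite-length $\cO_{V,\eta_j}$-module, a standard dévissage shows
$$
[\cO_{\cF}]\equiv\sum_{j=1}^r \operatorname{length}_{\cO_{V,\eta_j}}(\cF_{\eta_j})\,[\cO_{Z_j}]\pmod{F^{i+1}K_0(V)}.
$$
Hence $\gr_F^iK_0(V)$ is generated by the classes $[\cO_Z]$ with $Z$ of codimension $i$, so $[Z]\mapsto[\cO_Z]$ is a well-defined surjection from the group of codimension-$i$ cycles onto $\gr_F^iK_0(V)$. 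That this descends through rational equivalence — i.e. that $\sum_Z\operatorname{ord}_Z(f)\,[\cO_Z]$ lies in $F^{i+1}K_0(V)$ for every $f\in k(W)^\times$ with $W\subset V$ integral of codimension $i-1$ — is the input I would borrow from the cited references; it follows by computing $\gr K_0$ of $W$ via the localization sequence and pushing forward along $W\hookrightarrow V$. This yields the surjection $\varphi_i$, and every step is natural in $V$, hence compatible with scalar extension $k\subset L$.

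Next I would produce a retraction from Chern classes. Since $V$ is smooth, the total Chern class $c_t\colon K_0(V)\to 1+\bigoplus_{j\geqslant 1}CH^j(V)$ is a homomorphism to the multiplicative group, and for $x\in F^iK_0(V)$ one has $c_1(x)=\cdots=c_{i-1}(x)=0$. The Whitney formula then forces $x\mapsto c_i(x)$ to be additive on $F^iK_0(V)$ and to annihilate $F^{i+1}K_0(V)$, so it descends to a homomorphism $\gamma_i\colon\gr_F^iK_0(V)\to CH^i(V)$. The key computation is the value of the composite: for $Z$ of codimension $i$ I claim
$$
\gamma_i\big(\varphi_i([Z])\big)=c_i(\cO_Z)=(-1)^{i-1}(i-1)!\,[Z]\quad\text{in }CH^i(V).
$$
Rationally this follows from Grothendieck--Riemann--Roch, which gives ${\rm ch}_j(\cO_Z)=0$ for $j<i$ and ${\rm ch}_i(\cO_Z)=[Z]$, combined with Newton's identity $c_i=(-1)^{i-1}(i-1)!\,{\rm ch}_i$, valid once the lower Chern classes vanish. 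To upgrade this to an integral equality I would restrict to the open $U=V\setminus Z_{\mathrm{sing}}$, where $Z\cap U$ is smooth, hence regularly embedded: there the Koszul resolution of $\cO_{Z\cap U}$ computes $c_i$ integrally and yields $(-1)^{i-1}(i-1)!\,[Z\cap U]$. The difference $c_i(\cO_Z)-(-1)^{i-1}(i-1)!\,[Z]$ then restricts to $0$ on $U$, so it is represented by a codimension-$i$ class supported on $Z_{\mathrm{sing}}$, which has codimension $>i$; therefore it vanishes.

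Finally I would conclude. The displayed identity says $\gamma_i\circ\varphi_i=(-1)^{i-1}(i-1)!\cdot\operatorname{id}_{CH^i(V)}$, so if $\varphi_i(\alpha)=0$ then $(i-1)!\,\alpha=0$; thus $\Ker(\varphi_i)$ is contained in the $(i-1)!$-torsion of $CH^i(V)$. For $i=0$ the map is trivially an isomorphism; for $i=1,2$ we have $(i-1)!=1$, so $\varphi_i$ is injective and hence (being surjective) an isomorphism; and for $i=3$ we get $(i-1)!=2$, so $\Ker(\varphi_3)$ is $2$-torsion. Naturality of $c_i$, of ${\rm ch}$, and of the dévissage under $k\subset L$ gives the asserted compatibility with scalar extension.

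The one genuinely delicate point is the integral identity $c_i(\cO_Z)=(-1)^{i-1}(i-1)!\,[Z]$ — equivalently Riemann--Roch without denominators — which is exactly what makes the kernel bound $(i-1)!$ rather than $1$. The reduction to the smooth locus of $Z$ is the device I would use to turn the rational computation into an integral one, since over that locus the embedding is regular and the Koszul complex gives the Chern class on the nose.
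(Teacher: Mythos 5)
Your route is the same as the paper's: the paper does not prove this proposition but recalls it from SGA~6 and Fulton (which is why it cites \cite[Exp.\,0]{BGI} and \cite[Ex.\,15.1.5, Ex.\,15.3.6]{Ful}), and it deduces the kernel bound exactly from your retraction, namely the relation $c_i\circ\varphi_i=(-1)^{i-1}(i-1)!$. Your d\'evissage construction of $\varphi_i$, the descent of $c_i$ to a homomorphism $\gamma_i\colon\gr_F^iK_0(V)\to CH^i(V)$ (the needed vanishing $c_j(x)=0$ for $0<j<i$ and $x\in F^iK_0(V)$ indeed follows from the localization sequence, since such a class restricts to zero off a closed subset of codimension $\geqslant i>j$), and the reduction of the key identity $c_i(\cO_Z)=(-1)^{i-1}(i-1)!\,[Z]$ to the smooth locus $U=V\setminus Z_{\mathrm{sing}}$ are all correct.

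The gap is the last step, which you yourself flag as the crux: the claim that over $U$ ``the Koszul resolution of $\cO_{Z\cap U}$ computes $c_i$ integrally.'' A smooth closed subvariety is regularly embedded, i.e.\ a \emph{local} complete intersection, but it need not be a \emph{global} complete intersection in $U$, and only in the latter case does a global Koszul resolution by vector bundles exist. You cannot assemble the computation from an open cover, because Chow groups do not glue; and you cannot localize further to a neighbourhood $U_1$ of the generic point of $Z$ (where $Z$ \emph{is} a complete intersection), because the complement of $U_1$ then contains codimension-$i$ components, so $CH^i(U)\to CH^i(U_1)$ is no longer injective. Concretely, localization only tells you that the error class is the image of some $m[Z\cap U]$ with $m\in\Z$, and restricting to $U_1$ merely gives $m[Z\cap U_1]=0$ in $CH^i(U_1)$, which carries no information when $CH^i(U_1)$ is degenerate (e.g.\ $U_1$ an open subset of $\bbA^n$). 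Thus for a general regular embedding the identity $c_i(\cO_{Z'})=(-1)^{i-1}(i-1)!\,[Z']$ is precisely Riemann--Roch without denominators (Fulton, Thm.~15.3; Jouanolou in SGA~6), whose proof requires deformation to the normal cone: there the zero section of the normal bundle \emph{does} admit a global Koszul resolution, and the formula is transported back by specialization. As written, your argument assumes the statement it is meant to prove; either insert the deformation step or cite \cite[Ex.\,15.3.6]{Ful} outright, as the paper does.
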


Let us mention that for each $i\geqslant 0$, there is also a Chern class map $c_i\colon \gr^i_FK_0(V)\to CH^i(V)$ and we have relations ${c_i\circ\varphi_i=\varphi_i\circ c_i=(-1)^{i-1}(i-1)!}$. In particular, this implies that the kernel of $\varphi_i$ is contained in $(i-1)!$-torsion of $CH^i(V)$.

The filtration $F^{\bullet}K_0(V)$ is multiplicative. For each $i$, $0\leqslant i\leqslant d$, the pairing $\tau$ on~$K_0(V)$ induces a pairing ${\tau_i\colon \gr^i_FK_0(V)\otimes \gr^{d-i}_FK_0(V)\to \Z}$. The composition of the map ${\varphi_i\otimes\varphi_{d-i}}$ with $\tau_i$ equals the intersection pairing between Chow groups. In particular, the composition ${CH^d(X)\stackrel{\varphi_d}\longrightarrow F^dK_0(X)\stackrel{\chi}\longrightarrow\Z}$ equals the degree of zero-cycles.

\medskip

The following statement, as well as its proof, is an analogue of~\cite[Lem.\,2.1]{GO} and a general version of this fact had been proved by Panin in~\cite[Lem.\,7.4]{Pan}. We provide a proof for convenience of the reader.

\begin{lemma}\label{lemma:unimK}
Let $V$ be a smooth projective variety such that the \mbox{$K$-motive}~$KM(V)$ is of unit type. Then $K_0(V)$ is a free finitely generated abelian group and the pairing $\tau$ on $K_0(V)$ is unimodular.
\end{lemma}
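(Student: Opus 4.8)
The plan is to interpret the symmetric pairing $\tau$ purely categorically, as the composition pairing attached to the object $KM(V)$ in the symmetric monoidal category $\KM(k)$, and then to read off both assertions directly from the hypothesis $KM(V)\simeq\uno^{\oplus r}$.

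First I would record the two canonical identifications coming from $\Spec(k)\times V\cong V\cong V\times\Spec(k)$, namely
$$
K_0(V)=\Hom_{\KM(k)}(\uno,KM(V))=\Hom_{\KM(k)}(KM(V),\uno)\,.
$$
Since $\Hom_{\KM(k)}(\uno,\uno)=K_0(\Spec(k))=\Z$, the isomorphism $KM(V)\simeq\uno^{\oplus r}$ immediately gives $K_0(V)\simeq\Hom_{\KM(k)}(\uno,\uno^{\oplus r})=\Z^{\oplus r}$, so that $K_0(V)$ is free of finite rank. This settles the first assertion.

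The key step is to identify $\tau$ with the composition pairing
$$
\Hom_{\KM(k)}(\uno,KM(V))\otimes\Hom_{\KM(k)}(KM(V),\uno)\longrightarrow\End_{\KM(k)}(\uno)=\Z\,,\qquad a\otimes b\longmapsto b\circ a\,.
$$
To do so I would apply the composition law for $K$-theoretic correspondences: for classes $[E],[F]\in K_0(V)$ viewed respectively as $a\colon\uno\to KM(V)$ and $b\colon KM(V)\to\uno$, the ambient triple-product variety is $\Spec(k)\times V\times\Spec(k)=V$, both correspondence pullbacks are the identity, and the composition is computed as $p_*\big([E]\cdot[F]\big)$, where $p\colon V\to\Spec(k)$ is the structure morphism and the product is the (derived) tensor product in $K_0(V)$. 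Since $p_*$ is the Euler characteristic, this equals $\chi(V,E\otimes F)=\tau([E],[F])$. This is the $K$-motivic analogue of~\cite[Lem.\,2.1]{GO} and a special case of~\cite[Lem.\,7.4]{Pan}.

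Finally, under $KM(V)\simeq\uno^{\oplus r}$ the composition pairing becomes the standard pairing between $\Hom_{\KM(k)}(\uno,\uno^{\oplus r})=\Z^{\oplus r}$ and $\Hom_{\KM(k)}(\uno^{\oplus r},\uno)=\Z^{\oplus r}$; choosing the evident bases given by the structure inclusions and projections of the summands, one has $f_j\circ e_i=\delta_{ij}$, so the Gram matrix of $\tau$ is the identity and $\tau$ is unimodular. The main obstacle is the bookkeeping in the identification of $\tau$ with the composition pairing: one must check that the two canonical identifications of $K_0(V)$ match up so that $\tau$ is genuinely a pairing of the group $K_0(V)$ with itself. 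The symmetry of $\tau$ together with the naturality of the swap isomorphism $V\times\Spec(k)\cong\Spec(k)\times V$ makes this routine.
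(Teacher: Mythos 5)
Your proof is correct, and it takes a genuinely different, more elementary route than the paper's. The paper argues monoidally: it notes that $KM(V)$ is canonically self-dual in $\KM(k)$, with evaluation morphism the composition $KM(V)\otimes KM(V)\to KM(V)\to\uno$ (pullback along the diagonal followed by the morphism given by the class $[\cO_V]$), checks that the functor $K_0$ sends this evaluation to $\tau$, and uses that $K_0$ restricts to a symmetric monoidal equivalence from unit-type $K$-motives to free finitely generated abelian groups, so that unimodularity of $\tau$ is just the image of the self-duality of $KM(V)$ under this equivalence. You avoid duality and monoidal functors altogether: you identify $\tau$ with the composition pairing $\Hom_{\KM(k)}(\uno,KM(V))\otimes\Hom_{\KM(k)}(KM(V),\uno)\to\End_{\KM(k)}(\uno)=\Z$ via the correspondence composition law, $b_F\circ a_E=p_*\big([E]\cdot[F]\big)=\chi(V,E\otimes F)$, and then read off an identity Gram matrix from $f_j\circ e_i=\delta_{ij}\,\id_{\uno}$ on $\uno^{\oplus r}$. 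The computational core is the same in both proofs --- $\tau$ arises as pushforward to the point of a product --- but your packaging needs only the additive structure of $\KM(k)$ and the explicit composition law, whereas the paper's framing is more structural and makes transparent why the unimodular pairing is $\tau$ rather than the Euler pairing. Your handling of the one delicate point is also sound: checking unimodularity against two different bases of $K_0(V)$ (images of the inclusions on one side, of the projections on the other) is legitimate, since invertibility over $\Z$ of the matrix $\big(\tau(u_i,v_j)\big)$ is unaffected by independent changes of either basis in $\GL_r(\Z)$, and the composition computation exhibits $\tau$ as a pairing of $K_0(V)$ with itself irrespective of the symmetry considerations you mention.
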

\begin{proof}
Since $K_0$-groups are well-defined for $K$-motives, there are isomorphisms
$$
{K_0(V)\simeq K_0\big(KM(V)\big)\simeq K_0(\uno)^{\oplus r}}\simeq\Z^{\oplus r}\,,
$$
where $KM(V)\simeq \uno^{\oplus r}$. Moreover, the functor $K_0$ provides a symmetric monoidal equivalence of symmetric monoidal categories between the category of $K$-motives of unit type and the category of free finitely generated abelian groups (notice that the functor $K_0$ is not monoidal on the whole category $\KM(k)$).

On the other hand, the $K$-motive $KM(V)$ is canonically self-dual in $\KM(k)$. The corresponding evaluation morphism equals the composition
$$
KM(V)\otimes KM(V)\longrightarrow KM(V)\longrightarrow \uno\,,
$$
where the first morphism is given by the pull-back with respect to the diagonal embedding $V\hookrightarrow V\times V$ and the second morphism is given by the class $[\cO_V]\in K_0(V)=K_0\big(V\times\Spec(k)\big)$. One checks directly that the functor $K_0$ sends this evaluation morphism to the pairing~$\tau$.

We obtain that the pairing $\tau$ provides a self-duality of the group $K_0(V)$, that is, $\tau$ is unimodular.
\end{proof}

The following useful result is proved in~\cite[Lem.\,2.2]{GKMS}.

\begin{prop}\label{prop:galkin}
Let $V$ be a smooth projective variety such that the group $K_0(V)$ is torsion-free. Then the group~$CH^1(V)\simeq\gr^1_FK_0(V)$ is torsion-free as well.
\end{prop}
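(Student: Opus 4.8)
The plan is to prove the equivalent assertion that $\Pic(V)=CH^1(V)$ is torsion-free, the decisive point being that in the ring $K_0(V)$ the class of a torsion line bundle minus $[\cO_V]$ is nilpotent.

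First I would set up the reduction. Since $V$ is smooth, $CH^1(V)\cong\Pic(V)$, and by Proposition~\ref{prop:surjvarphi} the map $\varphi_1$ is an isomorphism $CH^1(V)\stackrel{\sim}\to\gr^1_FK_0(V)$; hence it suffices to show that $CH^1(V)$ has no nonzero torsion. A torsion element corresponds, under $CH^1(V)\cong\Pic(V)$, to a line bundle $L$ with $L^{\otimes n}\cong\cO_V$ for some integer $n\geqslant 1$. I would then set $w:=[L]-[\cO_V]\in K_0(V)$ and aim to prove $w=0$, since this gives $c_1(L)=c_1(w)=0$ and hence triviality of the chosen class.

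Two structural inputs about $w$ come next; write $d=\dim V$. As $w$ has rank $0$, it lies in $F^1K_0(V)$. Because the filtration $F^{\bullet}K_0(V)$ is multiplicative and $F^{d+1}K_0(V)=0$ (there is no closed subvariety of codimension exceeding $d$), the element $w$ is nilpotent with $w^{d+1}=0$. Meanwhile $L^{\otimes n}\cong\cO_V$ gives $[L]^{n}=[\cO_V]$, that is $(1+w)^n=1$; expanding and discarding the vanishing powers $w^{d+1},w^{d+2},\dots$ produces the single relation
\[
\sum_{k=1}^{d}\binom{n}{k}\,w^{k}=0 .
\]

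The heart of the argument is a descending induction that extracts $w=0$ from this relation using torsion-freeness of $K_0(V)$. Multiplying the relation by $w^{d-1}$ annihilates every term with $k\geqslant 2$ and leaves $n\,w^{d}=0$, so $w^{d}=0$. Multiplying instead by $w^{d-2},w^{d-3},\dots,1$ in turn, and dropping at each stage the powers already known to vanish, one obtains successively $n\,w^{d-1}=0,\ \dots,\ n\,w=0$, whence $w^{d-1}=\dots=w=0$. In particular $w=0$, which finishes the proof. The one genuinely delicate point is precisely this nilpotence-plus-induction step: one must notice that a torsion line bundle furnishes a nilpotent element satisfying a binomial relation whose lowest term has coefficient $n$, and then climb down the powers so that absence of additive torsion forces each one to vanish. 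The remaining ingredients—the identification $CH^1=\Pic$, the membership $w\in F^1K_0(V)$, and the vanishing of $F^{d+1}$—are standard and were recalled above.
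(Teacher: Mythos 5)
Your proof is correct, and it is essentially the argument the paper relies on: the paper does not prove this proposition itself but cites~\cite[Lem.\,2.2]{GKMS}, whose proof is exactly your scheme — a torsion line bundle $L$ yields $w=[L]-[\cO_V]\in F^1K_0(V)$, which is nilpotent by multiplicativity of the filtration, the relation $(1+w)^n=1$ combined with torsion-freeness of $K_0(V)$ forces $w=0$, and applying $c_1$ (or the determinant) kills the class of $L$ in $CH^1(V)\simeq\Pic(V)$. The only cosmetic difference is that the cited proof picks the maximal $m$ with $w^m\neq 0$ and gets the contradiction $n\,w^m=0$ in one step, whereas you run a descending induction from $w^d$ down to $w$; both are the same idea.
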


\medskip

Now consider the case of a threefold with $K$-motive of unit type. Given an abelian group~$\Gamma$ and a natural number $l$, by $\Gamma_l$ denote the $l$-torsion subgroup of $\Gamma$.

\begin{prop}\label{prop:key}
Let $X$ be an irreducible smooth projective variety of dimension $3$ such that the \mbox{$K$-motive}~$KM(X)$ is of unit type. Then the following holds true:
\begin{itemize}
\item[(i)]
the degree map gives an isomorphism
$$
{\rm deg}\;:\; CH^3(X)/CH^3(X)_2\stackrel{\sim}\longrightarrow \Z\,;
$$
in particular, $X$ has a zero-cycle of degree $1$;
\item[(ii)]
the Chow groups $CH^1(X)$ and $CH^2(X)$ are free finitely generated abelian groups and the intersection pairing
$$
\qquad CH^1(X)\otimes CH^{2}(X)\longrightarrow \Z
$$
is unimodular;
\item[(iii)]
for any field extension $k\subset L$ and for $i=0,1,2$, the natural homomorphism
$$
CH^i(X)\longrightarrow CH^i(X_L)
$$
is an isomorphism.
\end{itemize}
\end{prop}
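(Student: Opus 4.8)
The plan is to feed $X$ into the filtered-ring machinery of this section, taking $A=K_0(X)$ with $\chi$ the Euler characteristic and $F^\bullet$ the codimension-of-support filtration, so that $\langle\cdot,\cdot\rangle$ becomes the pairing $\tau$. First I would record that the hypotheses of Corollary~\ref{cor:key} hold. By Lemma~\ref{lemma:unimK} the group $K_0(X)$ is free of finite rank and $\tau$ is unimodular. Since $X$ is irreducible, the isomorphism $\varphi_0$ gives $\gr^0_FK_0(X)\simeq CH^0(X)\simeq\Z$; and since $K_0(X)$ is torsion-free, Proposition~\ref{prop:galkin} shows $\gr^1_FK_0(X)\simeq CH^1(X)$ is torsion-free. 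Corollary~\ref{cor:key} then yields that $\chi\colon F^3K_0(X)\to\Z$ is an isomorphism, that $\gr^1_FK_0(X)$ and $\gr^2_FK_0(X)$ are torsion-free, and that $\tau_1\colon\gr^1_FK_0(X)\otimes\gr^2_FK_0(X)\to\Z$ is unimodular.

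Part (ii) is then immediate: the isomorphisms $\varphi_1,\varphi_2$ identify $CH^1(X),CH^2(X)$ with torsion-free finitely generated groups, hence free, and since the intersection pairing is $(\varphi_1\otimes\varphi_2)$ followed by $\tau_1$, it is unimodular. For part (i) I would use that the degree map equals $\chi\circ\varphi_3$. As $\chi$ is an isomorphism, $\Ker({\rm deg})=\Ker(\varphi_3)$, which by Proposition~\ref{prop:surjvarphi} lies in $CH^3(X)_2$; conversely any torsion class dies under ${\rm deg}$, so $CH^3(X)_2\subseteq\Ker({\rm deg})$. Hence $\Ker({\rm deg})=CH^3(X)_2$ and, ${\rm deg}$ being surjective, it descends to the asserted isomorphism $CH^3(X)/CH^3(X)_2\stackrel{\sim}{\longrightarrow}\Z$; in particular a zero-cycle of degree $1$ exists.

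For part (iii) the case $i=0$ is trivial once $X_L$ is known to be irreducible, and the plan is to reduce everything to the already-proved statements (i) and (ii) applied to $X_L$. By Proposition~\ref{prop:known}, $M(X)_{\bbQ}$ is of Lefschetz type, so Lemma~\ref{lem:Chowrat} guarantees that $X_L$ is irreducible and that $CH^i(X)_{\bbQ}\to CH^i(X_L)_{\bbQ}$ is an isomorphism for all $i$. Since scalar extension is a monoidal functor $\KM(k)\to\KM(L)$ sending $KM(X)$ to $KM(X_L)$ and $\uno$ to $\uno$, the $K$-motive $KM(X_L)$ is again of unit type, so (i) and (ii) apply to $X_L$: the groups $CH^1(X_L),CH^2(X_L)$ are free of finite rank with unimodular intersection pairing. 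The pullback $f^*$ along $f\colon X_L\to X$ gives maps $\iota_i\colon CH^i(X)\to CH^i(X_L)$; being isomorphisms after $\otimes\,\bbQ$ with torsion-free source, they are injective, and the ranks agree on both sides.

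The crux, and the step I expect to be the main obstacle, is upgrading these rational isomorphisms to integral ones for $i=1,2$, which I would carry out using compatibility of the two unimodular intersection pairings. Because $f^*$ is a ring homomorphism preserving degrees of zero-cycles, one has $\langle\iota_1\alpha,\iota_2\beta\rangle_{X_L}=\langle\alpha,\beta\rangle_X$. Writing $\psi_X\colon CH^1(X)\stackrel{\sim}{\longrightarrow}CH^2(X)^{\vee}$ and $\psi_{X_L}\colon CH^1(X_L)\stackrel{\sim}{\longrightarrow}CH^2(X_L)^{\vee}$ for the isomorphisms furnished by unimodularity, this compatibility reads $\psi_X=\iota_2^{\vee}\circ\psi_{X_L}\circ\iota_1$. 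Since $\psi_X$ is an isomorphism, $\iota_1$ is a split injection, so its cokernel is a direct summand of the free group $CH^1(X_L)$; having rank zero, the cokernel vanishes and $\iota_1$ is an isomorphism. The symmetric argument, viewing the pairing as $CH^2\otimes CH^1\to\Z$, gives the same for $\iota_2$, completing (iii).
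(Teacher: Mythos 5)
Your treatment of parts (i) and (ii) follows the paper's own route (Lemma~\ref{lemma:unimK}, Proposition~\ref{prop:galkin}, Corollary~\ref{cor:key}, Proposition~\ref{prop:surjvarphi}), but as written it has a gap: Corollary~\ref{cor:key} is stated under the standing assumptions of that section, one of which is the rank symmetry $\rk(\gr^i_FA)=\rk(\gr^{d-i}_FA)$ for all $i$. You never verify this for $A=K_0(X)$, and it is not automatic --- it is precisely the point where the rational theory enters parts (i) and (ii). Without it the proofs of Lemma~\ref{lem:unimod} and Lemma~\ref{lemma:secondkey} do not go through; in particular one cannot conclude that $F^3K_0(X)$ has rank one, hence cannot conclude that $\chi\colon F^3K_0(X)\to\Z$ is an isomorphism, which your proof of (i) rests on. The paper fills this in by one sentence: by Proposition~\ref{prop:known} the rational Chow motive of $X$ is of Lefschetz type, so Lemma~\ref{lem:Chowrat}(i) gives $\rk CH^i(X)_{\bbQ}=\rk CH^{3-i}(X)_{\bbQ}$, and since the maps $(\varphi_i)_{\bbQ}$ of Proposition~\ref{prop:surjvarphi} are isomorphisms, $\rk\big(\gr^i_FK_0(X)\big)=\rk\big(\gr^{3-i}_FK_0(X)\big)$. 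You do cite Proposition~\ref{prop:known} and Lemma~\ref{lem:Chowrat}, but only in part (iii) and only for irreducibility and base change, never for this rank count; the patch is easy, but without it your application of Corollary~\ref{cor:key} is unjustified, and the gap propagates into (iii), since there you apply (i) and (ii) to $X_L$.

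Granting that patch, your part (iii) is correct and takes a genuinely different route from the paper. The paper works at the level of $K$-theory: the base-change map $K_0(X)\to K_0(X_L)$ is an integral isomorphism because $KM(X)$ is of unit type, it is a rational isomorphism of \emph{filtered} groups by Lemma~\ref{lem:Chowrat}(ii) and Proposition~\ref{prop:surjvarphi}, and since the filtration $F^{\bullet}K_0(X)$ splits (a by-product of Corollary~\ref{cor:key}), these facts force the filtered map to be an integral isomorphism; the isomorphisms $\varphi_i$ for $i\leqslant 2$ then transfer this to Chow groups. You instead stay entirely at the level of Chow groups: $KM(X_L)$ is of unit type and $X_L$ is irreducible, so (i) and (ii) hold over $L$, and the compatibility $\psi_X=\iota_2^{\vee}\circ\psi_{X_L}\circ\iota_1$ (which is correct, since pullback is a ring homomorphism preserving degrees of zero-cycles) exhibits $\iota_1$ as a split injection between free groups of equal rank, hence an isomorphism, and symmetrically for $\iota_2$. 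Your argument is arguably more self-contained, using unimodularity over both fields in place of the splitting of the $K$-theoretic filtration; what the paper's argument buys is the stronger intermediate statement that the entire filtered group $F^{\bullet}K_0(X)$ is insensitive to base change, with (iii) falling out as a corollary.
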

\begin{proof}
By Lemma~\ref{lemma:unimK}, $K_0(X)$ is a free finitely generated abelian group and the pairing $\tau$ on~$K_0(X)$ is unimodular. By Proposition~\ref{prop:known}, the rational Chow motive of $X$ is of Lefschetz type. Hence by Lemma~\ref{lem:Chowrat}$(i)$ and Proposition~\ref{prop:surjvarphi}, for each~$i$, $0\leqslant i\leqslant d$, there are equalities
$$
\rk\big(\gr^i_FK_0(X)\big)=\rk\big(CH^i(X)\big)=\rk\big(CH^{d-i}(X)\big)=\rk\big(\gr^{d-i}_FK_0(X)\big)\,.
$$
Since $X$ is irreducible, there is an isomorphism $\Z\simeq CH^0(X)\simeq\gr^0_FK_0(X)$. By Proposition~\ref{prop:galkin}, $CH^1(X)\simeq \gr^1_FK_0(X)$ is torsion-free. Hence by Corollary~\ref{cor:key} and Proposition~\ref{prop:surjvarphi}, we obtain~$(i)$ and~$(ii)$. Also, we see that the filtration $F^{\bullet}K_0(X)$ splits.

Consider a field extension $k\subset L$. Using Lemma~\ref{lem:Chowrat}$(ii)$ and the fact that $(\varphi_i)_{\bbQ}$ are isomorphisms and commute with extension of scalars by Proposition~\ref{prop:surjvarphi}, we see that the arising morphism of filtered groups $\eta\colon F^{\bullet}K_0(X)\to F^{\bullet}K_0(X_L)$ is an isomorphism after tensoring with~$\bbQ$. In addition, the homomorphism $K_0(X)\to K_0(X_L)$ is an isomorphism, because $KM(X)$ is of unit type. As it was shown above, the filtration $F^{\bullet}K_0(X)$ splits. Altogether this implies that the morphism of filtered groups $\eta$ is an isomorphism. By Proposition~\ref{prop:surjvarphi}, this proves~$(iii)$.
\end{proof}

\begin{remark}
A similar argument as in the proof of Proposition~\ref{prop:key}$(i)$ shows that if $X$ is a smooth projective variety $X$ of dimension $4$ with $K$-motive of unit type, then the degree map gives an isomorphism ${\rm deg}\colon CH^4(X)/CH^4(X)_6\stackrel{\sim}\longrightarrow \Z$. Namely, using Proposition~\ref{prop:galkin}, one applies Lemma~\ref{lemma:secondkey} with $i_0=2$.
\end{remark}

\medskip

The following standard argument shows that Proposition~\ref{prop:key} allows to split a Lefschetz type motive out of $M(X)$.

\begin{cor}\label{cor:split}
Under assumptions of Proposition~\ref{prop:key}, there is an isomorphism of Chow motives
$$
M(X)\simeq M\oplus N\,,
$$
where $M$ is of Lefschetz type and $N$ is such that for any field extension $k\subset L$, we have $CH^i(N_L)=0$ for $i=0,1,2$ and $CH^3(N_L)$ coincides with~$CH^3(X_L)_2$.
\end{cor}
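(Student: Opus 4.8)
The plan is to produce an explicit idempotent correspondence $\pi\in CH^3(X\times X)=\End_{\CH(k)}\big(M(X)\big)$ that cuts out the ``algebraic'' part of $M(X)$, to set $M:=\im(\pi)$ and $N:=\im(\id_{M(X)}-\pi)$, and to read off the Chow groups of $N$. Since the category of Chow motives is pseudo-abelian, such a splitting exists as soon as $\pi$ is idempotent. The building blocks are supplied by Proposition~\ref{prop:key}: by part~(i) I fix a zero-cycle $e_3\in CH^3(X)$ of degree $1$, and by part~(ii) I choose bases $a_1,\dots,a_m$ of $CH^1(X)$ and $b_1,\dots,b_m$ of $CH^2(X)$ (where $m=\rk CH^1(X)=\rk CH^2(X)$) that are dual for the unimodular intersection pairing, i.e. $\deg(a_i\cdot b_j)=\delta_{ij}$. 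I then set
$$
\pi_0=p_2^*e_3,\qquad \pi_1=\sum_{i=1}^m p_1^*a_i\cdot p_2^*b_i,\qquad \pi_2=\sum_{i=1}^m p_1^*b_i\cdot p_2^*a_i,\qquad \pi_3=p_1^*e_3,
$$
all lying in $CH^3(X\times X)$, and $\pi:=\pi_0+\pi_1+\pi_2+\pi_3$.

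The first step is to check that the $\pi_j$ are mutually orthogonal idempotents. For decomposable classes a routine projection-formula computation on $X\times X\times X$ gives a composition rule of the form
$$
(p_1^*\alpha'\cdot p_2^*\beta')\circ(p_1^*\alpha\cdot p_2^*\beta)=\deg(\beta\cdot\alpha')\,(p_1^*\alpha\cdot p_2^*\beta'),
$$
where $\deg(\beta\cdot\alpha')$ is understood to vanish unless the codimensions of $\beta$ and $\alpha'$ are complementary. In the four families above the pair (codim $\alpha$, codim $\beta$) runs through $(0,3),(1,2),(2,1),(3,0)$, so every cross-composition $\pi_j\circ\pi_{j'}$ with $j\neq j'$ vanishes for codimension reasons, while the self-compositions collapse to the Kronecker relations $\deg(a_ib_j)=\delta_{ij}$ and $\deg(e_3)=1$, giving $\pi_j^2=\pi_j$. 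Hence $\pi$ is idempotent and $M(X)\simeq M\oplus N$ with $M=\im(\pi)=\bigoplus_j\im(\pi_j)$. To see that $M$ is of Lefschetz type I realize each $\im(\pi_j)$ as a Lefschetz summand by a section--retraction pair: under the identifications $\Hom(\Le^i,M(X))=CH^i(X)$ and $\Hom(M(X),\Le^i)=CH^{3-i}(X)$, with composition given by the intersection pairing, the classes $[X]$, $e_3$, $a_i$, $b_i$ define morphisms whose composites are identities by the same Kronecker relations. This yields $M\simeq\uno\oplus(\Le^1)^{\oplus m}\oplus(\Le^2)^{\oplus m}\oplus\Le^3$, independently of the chosen variance conventions.

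Finally I compute the Chow groups of $N_L$. The correspondences $\pi_j$ are defined over $k$ and commute with extension of scalars, so the decomposition $M(X)\simeq M\oplus N$ does too, and $CH^i(N_L)$ is the image of $(\id_{M(X)}-\pi)$ acting on $CH^i(X_L)$. A direct computation of the action of a decomposable correspondence (again the projection formula) shows that on $CH^i(X_L)$ with $i=0,1,2$ exactly one of the four projectors acts and it acts as the identity: for $i=1,2$ this uses that, by Proposition~\ref{prop:key}(iii), the $a_i$ and $b_i$ remain dual bases of $CH^1(X_L)$ and $CH^2(X_L)$ (intersection numbers are insensitive to the field), and for $i=0$ it uses $\deg(e_3)=1$ with irreducibility of $X_L$. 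Thus $\pi$ acts as the identity on $CH^i(X_L)$ and $CH^i(N_L)=0$ for $i=0,1,2$. On $CH^3(X_L)$ only $\pi_0$ acts, by $\xi\mapsto\deg(\xi)\,e_3$, so $CH^3(N_L)$ is the image of $\xi\mapsto\xi-\deg(\xi)e_3$, which is exactly $\Ker\big(\deg\colon CH^3(X_L)\to\Z\big)$. Since $X_L$ is again an irreducible smooth projective threefold with $KM(X_L)$ of unit type (unit type is stable under base change, and irreducibility follows from Lemma~\ref{lem:Chowrat}(iii) via Proposition~\ref{prop:known}), Proposition~\ref{prop:key}(i) applies to $X_L$ and gives $\Ker(\deg)=CH^3(X_L)_2$, which is the asserted description of $N$.

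The genuinely nontrivial inputs are all isolated in Proposition~\ref{prop:key}, namely the existence of dual bases for the middle pairing and the degree isomorphism modulo $2$-torsion in codimension $3$; granting these, the construction is the standard one of Chow--K\"unneth-type Lefschetz projectors. Accordingly, the only point that needs genuine care in the writing is the bookkeeping of codimensions and variances in the composition and action formulas, together with the (automatic, by parts~(i)--(iii)) verification that all relevant classes and relations survive base change.
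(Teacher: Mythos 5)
Your proposal is correct and takes essentially the same approach as the paper's proof: you construct the identical idempotent $\pi = X\times\alpha+\sum_i D_i\times C_i+\sum_i C_i\times D_i+\alpha\times X$ (your $\pi_0+\pi_1+\pi_2+\pi_3$) from a degree-one zero-cycle and dual bases for the unimodular pairing $CH^1(X)\otimes CH^2(X)\to\Z$, using parts (i)--(iii) of Proposition~\ref{prop:key} for exactly the same purposes. The differences are purely expository: you spell out the orthogonal-idempotent verification and the action on Chow groups in more detail, and you make explicit the application of Proposition~\ref{prop:key}(i) to $X_L$ (via stability of the hypotheses under base change), which the paper leaves implicit.
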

\begin{proof}
By Proposition~\ref{prop:key}$(i)$, there is a zero-cycle $\alpha\in CH^3(X)$ of degree $1$. By Proposition~\ref{prop:key}$(ii)$, we may choose a basis $D_1,\ldots,D_r$ in the free abelian group $CH^1(X)$, and the dual basis $C_1,\ldots,C_r$ in the free abelian group~$CH^2(X)$. Given elements $a\in CH^i(X)$ and $b\in CH^j(X)$, put
$$
a\times b:=p_1^*a\cdot p_2^*b\in CH^{i+j}(X\times X)\,,
$$
where $p_1,p_2\colon X\times X\to X$ are the natural projections. Define an element
$$
\mbox{$\pi:=X\times\alpha+\sum\limits_{i=1}^rD_i\times C_i+\sum\limits_{i=1}^rC_i\times D_i+\alpha\times X$}\in CH^3(X\times X)\,.
$$
One checks easily that $\pi$ is an idempotent as a correspondence from $X$ to itself. Let $N$ be a Chow motive that splits out of $M(X)$ by the idempotent~${1-\pi}$. We obtain a decomposition
$$
M(X)\simeq \uno\oplus\Le^{\oplus r}\oplus (\Le^2)^{\oplus r}\oplus\Le^3\oplus N\,.
$$
Note that the natural homomorphism
$$
CH^3(X)/CH^3(X)_2\to CH^3(X_L)/CH^3(X_L)_2
$$
is an isomorphism, being the identity map from $\Z$ to itself by Proposition~\ref{prop:key}$(i)$. Thus Proposition~\ref{prop:key}$(iii)$ implies that the motive $N$ satisfies all conditions claimed in the corollary.
\end{proof}

\begin{remark}\label{rmk:Chowvan}
Let $P$ be a Chow motive over $k$ such that for any field extension $k\subset L$, all Chow groups of the Chow motive $P_L$ vanish. Using the same argument as in the proof of~\cite[Lem.\,1]{GG}, one concludes that~${P=0}$. Thus in order to prove Theorem~\ref{thm:main} with the help of Corollary~\ref{cor:split}, it remains to show that $CH^3(X_L)_2=0$ for any field extension $k\subset L$.
\end{remark}

\begin{remark}\label{rmk:lawdim}
The same arguments as above apply in dimensions $1$ and $2$ as well. Namely, we have the following reasonings.
\begin{itemize}
\item[(i)]
Let $X$ be an irreducible smooth projective curve such that the $K$-motive $KM(X)$ is of unit type. The filtration $F^{\bullet}K_0(X)$ splits, because $\gr^0_FK_0(X)\simeq \Z$. By Lemma~\ref{lem:unimod} with $d=1$, we see that the pairing $CH^0(X)\otimes CH^1(X)\to \Z$ is unimodular. This implies that the Jacobian of $X$ vanishes and that $X$ has a zero-cycle of degree $1$. Thus $X\simeq \PP^1$ and henceforth the Chow motive of $X$ is of Lefschetz type.
\item[(ii)]
Let $X$ be an irreducible smooth projective surface such that the $K$-motive $KM(X)$ is of unit type. One easily modifies the argument in the proof of Proposition~\ref{prop:key} replacing Corollary~\ref{cor:key} by Remark~\ref{rmk:2d} (in this case, one does not use Proposition~\ref{prop:galkin}). Thus one obtains that there is an isomorphism $CH^2(X)\simeq\Z$, the Chow group $CH^1(X)$ is a free finitely generated group, the intersection pairing $CH^1(X)\otimes CH^1(X)\to\Z$ is unimodular, and for any field extension $k\subset L$ and for $i=0,1,2$, the natural homomorphism $CH^i(X)\longrightarrow CH^i(X_L)$ is an isomorphism. Then a splitting argument as in the proof of Corollary~\ref{cor:split} together with Remark~\ref{rmk:Chowvan} imply directly that the Chow motive of $X$ is of Lefschetz type.
\end{itemize}
\end{remark}

\section{Absence of torsion zero-cycles}\label{sec:absence}

In this section, we show that the group~$CH^3(X)$ is torsion-free for any threefold $X$ as in Theorem~\ref{thm:main}. Together with results of the previous section, this allows us to prove the theorem.

Here is the plan of the proof of the vanishing of torsion in $CH^3(X)$. It follows from Proposition~\ref{prop:surjvarphi} that $CH^3(X)$ has only $2$-torsion and it is equal to the kernel of~$\varphi_3$. This kernel coincides with the image of the differential $d_2\colon H^1(X,\cK_2)\to CH^3(X)$ in the Brown--Gersten spectral sequence, which factors through the quotient $H^1(X,\cK_2)/2$. It turns out that the product map
$$
\mu\;:\;k^*/2\otimes CH^1(X)/2\longrightarrow H^1(X,\cK_2)/2
$$
is an isomorphism, which implies that $d_2$ vanishes. To show that $\mu$ is an isomorphism, first we use that by the Merkurjev--Suslin theorem, one has an embedding
$$
H^1(X,\cK_2)/2\hookrightarrow H^3_{\acute e t}(X,\Z/2)\,.
$$
For simplicity, assume for a moment that the characteristic of $k$ is zero. The Hochschild--Serre spectral sequence expresses \'etale cohomology of $X$ in terms of \'etale cohomology of the scalar extension $X_{\bar k}$ of $X$ to the algebraic closure $\bar k$ of $k$. By the Lefschetz principle, we can assume that $\bar k\subset \C$, so that \'etale cohomology of $X_{\bar k}$ with coefficients in $\Z/2$ can be computed in terms classical complex cohomology of $X(\C)$ with coefficients in $\Z$. Now the Atiyah--Hirzeburch spectral sequence relates integral cohomology of $X(\C)$ with topological \mbox{$K$-groups} of $X(\C)$. Note that the Atiyah--Hirzeburch spectral sequence degenerates for smooth projective complex threefolds. Finally, since the $K$-motive of $X$ is of unit type, topological $K$-groups of $X(\C)$ have a very simple structure, which allows to work effectively with the Atiyah--Hirzebruch spectral sequence.

\medskip

Now let us fulfil this plan. We start by analyzing cohomology of a threefold whose \mbox{$K$-motive} is of unit type when the ground field is either $\C$, or, more generally, is separably closed. For complex varieties, we will use topological $K$-theory, while, for varieties over an arbitrary separably closed field, we will use \'etale $K$-theory developed by Friedlander in~\cite{Fr1},~\cite{Fr2}, Dwyer and Friedlander in~\cite{DF}, and Thomason in~\cite{Th} (see also a survey in~\cite[\S\,1.5]{Gei}).

\medskip

Let $V$ be a smooth algebraic variety over a separably closed field $L$. Let $l$ be a prime number different from the characteristic of $L$ and choose a generator of the Tate module $\Z_l(1)$, that is, a compatible system of $l$-primary roots of unity in $L$. The choice allows us to ignore Tate twists in \'etale cohomology.

By $H^i(V)$, $i\geqslant 0$, denote either the classical complex cohomology group $H^i\big(V(\C),\Z\big)$ when $L=\C$, or the \'etale cohomology group $H^i_{\acute e t}(V,\Z_l)$ when $L$ is an arbitrary separably closed field. Respectively, by~$KT_i(V)$, $i\geqslant 0$, denote either the topological $K$-group $K_i^{top}\big(V(\C)\big)$, or the \'etale $K$-group~$\hat{K}_i^{\acute e t}(V)$ related with the prime number~$l$. Namely, let $U(\infty)$ be the infinite unitary group,~$BU(\infty)$ be its classifying space, and let $\Map$ denote the space of continuous maps between topological spaces (or simplicial sets, pro-simplicial sets, etc). By $\Map\big(V,\Z\times BU(\infty)\big)$ denote either the space~${\Map\big(V(\C),\Z\times BU(\infty)\big)}$, or the space~${\Map\big(V_{\acute e t},(\Z\times BU(\infty))\,\hat{}_l\,\big)}$, where  a pro-simplicial set~$V_{\acute e t}$ is the \'etale homotopy type of $V$ defined by Artin and Mazur in~\cite{AM} and~$\big(\Z\times BU(\infty)\big)\,\hat{}_l$ is an $l$-adic completion of the space~${\Z\times BU(\infty)}$ defined in an appropriate way, see more detail in~\cite[\S\,1]{Fr1}. Then there are isomorphisms
$$
KT_i(V)\simeq\pi_i \Map\big(V,\Z\times BU(\infty)\big)\,,\qquad i\geqslant 0\,.
$$
The topological $K$-group $K_0^{top}\big(V(\C)\big)$ is the Grothendieck group of complex vector bundles on the manifold~$V(\C)$.

Note that the loop space $\Omega\big(\Z\times BU(\infty)\big)$ is homotopy equivalent to $U(\infty)$ and, by Bott periodicity, $\Omega U(\infty)$ is homotopy equivalent to $\Z\times BU(\infty)$. Hence there are isomorphisms ${KT_{2i}(V)\simeq KT_0(V)}$ and ${KT_{2i+1}(V)\simeq KT_1(V)}$, $i\geqslant 0$.

\medskip
One has the Atiyah--Hirzebruch spectral sequence
$$
E_2^{ij}=H^i(V,j/2)\Rightarrow KT_{-i-j}(V)\,,
$$
where $H^i(V,j/2):=H^i(V)$ for $j$ even and $H^i(V,j/2)=0$ for $j$ odd. The differential $d_2$ is zero and the $E_3$-term looks as follows:
$$
\xymatrix{
0&0&0&0&0&0&0\\
H^0(V)\ar[rrrdd]^{d_3}&H^1(V)\ar[rrrdd]^{d_3}&H^2(V)\ar[rrrdd]^{d_3}&H^3(V)\ar[rrrdd]^{d_3}&H^4(V)&H^5(V)&H^6(V)\\
0&0&0&0&0&0&0\\
H^0(V)&H^1(V)&H^2(V)&H^3(V)&H^4(V)&H^5(V)&H^6(V)}
$$
The spectral sequence is periodic with respect to the vertical shift by two and degenerates in the $E_2$-term after tensoring with~$\bbQ$. In particular, the images of all differentials are torsion groups.

Originally, the Atiyah--Hirzebruch spectral sequence was constructed in~\cite{AH0} by taking the increasing skeletal filtration on $V(\C)$ (when $L=\C$). One obtains the same spectral sequence by taking the decreasing Postnikov filtration on $\Z\times BU(\infty)$ (see, e.g.,~\cite[Theor.\,B.8]{GM} for the equivalence of these two approaches). Note that the $j$-th step of the Postnikov filtration on~$\Z\times BU(\infty)$ is homotopy equivalent to the Eilenberg--Maclane space $K(\Z,j)$ for $j$ even and is trivial for $j$ odd and there are natural isomorphisms
$$
\pi_i\Map\big(V,K(\Z,j)\big)\simeq H^{j-i}(V)\,,\qquad i\leqslant j\,,
$$
$$
\pi_i\Map\big(V,K(\Z,j)\big)=0\,,\qquad \, i>j\,.
$$
The first two non-trivial steps of the Postnikov filtration on $\Z\times BU(\infty)$ look as follows:
$$
\Z\times BU(\infty)\longrightarrow \Z\sim K(\Z,0)\,,
$$
$$
BU(\infty)\longrightarrow BU(1)\sim K(\Z,2)\,,
$$
where the first map is the natural projection, the second map is induced by the determinant~${U(\infty)\to U(1)}$, and $\sim$ denotes homotopy equivalence. Clearly, both maps have splittings (for the second map one uses the embedding $U(1)\to U(\infty)$ defined by any diagonal entry).

It follows that the Atiyah--Hirzebruch spectral sequence defines surjective homomorphisms
$$
KT_0(V)\to H^0(V)\,,\qquad KT_1(V)\longrightarrow H^1(V)\,,\qquad \widetilde{K}T_0(V)\longrightarrow H^2(V)\,,
$$
where $\widetilde{K}T_0(V)$ denotes the kernel of the homomorphism $KT_0(V)\to H^0(V)$. We conclude that all differentials in the Atiyah--Hirzebruch spectral sequence that come out of~$H^0(V)$, $H^1(V)$, and $H^2(V)$ are zero.

When $L=\C$, the homomorphism $K_0^{top}\big(V(\C)\big)\to H^0\big(V(\C),\Z\big)$ is given by the rank of vector bundles and the homomorphism $\widetilde{K}_0^{top}\big(V(\C)\big)\to H^2\big(V(\C),\Z\big)$ is the usual first Chern class of vector bundles.

\begin{lemma}\label{lemma:degen}
For any a smooth projective threefold $V$ over a separably closed field, the Atiyah--Hirzebruch spectral sequence degenerates in the $E_2$-term.
\end{lemma}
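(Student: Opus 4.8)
The plan is to reduce the assertion to the vanishing of a single differential. Recall from the discussion preceding the lemma that $d_2=0$ and that every differential emanating from $H^0(V)$, $H^1(V)$, or $H^2(V)$ is zero. A differential $d_r$ has bidegree $(r,1-r)$, so it carries the entry in row $j$ to the entry in row $j-r+1$; since the odd rows of the $E_2$-page vanish, a differential can be nonzero only when $r$ is odd, in which case it takes the form $d_r\colon H^i(V)\to H^{i+r}(V)$. As $V$ is a threefold, $H^m(V)=0$ for $m>6$, so for $r\geqslant 5$ such a differential either lands in a zero group or emanates from $H^0(V)$ or $H^1(V)$ and hence vanishes. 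For $r=3$ the only differentials with both source and target nonzero are $d_3\colon H^i(V)\to H^{i+3}(V)$ with $0\leqslant i\leqslant 3$, and those with $i\leqslant 2$ vanish by the above. Therefore the spectral sequence degenerates at $E_2$ as soon as the single differential $d_3\colon H^3(V)\to H^6(V)$ is shown to be zero.

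To prove this I would combine two facts. First, as already noted, the spectral sequence degenerates in the $E_2$-term after tensoring with $\bbQ$, so the image of every differential, in particular of $d_3\colon H^3(V)\to H^6(V)$, is a torsion subgroup of $H^6(V)$. Second, $H^6(V)$ is the top-degree cohomology of a connected smooth projective threefold and is therefore torsion-free: in the complex case $H^6\big(V(\C),\Z\big)\simeq\Z$ since $V(\C)$ is a compact connected oriented $6$-manifold, while in the separably closed case Poincar\'e duality gives $H^6(V)\simeq\Z_l$ (after our chosen normalization of Tate twists). Consequently the only torsion subgroup of $H^6(V)$ is trivial, the differential $d_3\colon H^3(V)\to H^6(V)$ vanishes, and degeneration at $E_2$ follows.

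I do not expect a serious obstacle; the substance is simply the observation that for a threefold the last surviving differential has torsion-free target. The one point deserving care is the bookkeeping that isolates $d_3\colon H^3(V)\to H^6(V)$ as the sole differential not already killed by the preceding analysis and that rules out all higher differentials, which is handled by the parity-and-dimension count above. Should one wish to allow $V$ to be disconnected, one notes only that $H^6(V)$ remains a free module whose rank equals the number of connected components, so its torsion-freeness, and hence the conclusion, is unaffected.
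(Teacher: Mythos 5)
Your proof is correct and takes essentially the same route as the paper: both arguments isolate $d_3\colon H^3(V)\to H^6(V)$ as the only potentially non-zero differential (using the previously established vanishing of differentials out of $H^0(V)$, $H^1(V)$, $H^2(V)$ together with parity and dimension reasons), and both kill it by combining the rational degeneration (so images of differentials are torsion) with the torsion-freeness of the top cohomology group $H^6(V)$. Your write-up simply makes explicit the bookkeeping and the Poincar\'e-duality justification that the paper leaves implicit.
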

\begin{proof}
The only potentially non-zero differential in the Atiyah--Hirzebruch spectral sequence is the differential $d_3\colon H^3(V)\to H^6(V)$. However, since $V$ is a smooth projective threefold, the group $H^6(V)$ is torsion-free, whence this differential is zero.
\end{proof}

\medskip

We have a homomorphism of rings $K_0(V)\to KT_0(V)$, which commutes with pull-backs. Moreover, it commutes with push-forwards with respect to morphisms between smooth projective varieties. For topological $K$-groups, this was proved by Atiyah and Hirzebruch in~\cite[Theor.\,4.2]{AH}. For \'etale $K$-groups, this holds just by definition of the push-forward on them, which, in turn, is based on a comparison theorem by Thomason between algebraic \mbox{$K$-groups} and \'etale \mbox{$K$-groups}, see~\cite[\S\S\,1.13, 2.2]{Th2} (see also~\cite{Th3} for a more general definition of the push-forward on \'etale $K$-groups, which is also compatible with the push-forward on algebraic \mbox{$K$-groups}). We obtain a homomorphism between the rings of correspondences ${K_0(V\times V)\to KT_0(V\times V)}$, where the product is defined by composition of correspondences. Since the groups $KT_i(V)$, $i=0,1$, are modules over the ring of correspondences $KT_0(V\times V)$, we see that topological $K$-groups and \'etale $K$-groups are well-defined for~\mbox{$K$-motives}.

In what follows, let a ring $R$ be either~$\Z$ when $L=\C$, or $\Z_l$ when $L$ is an arbitrary separably closed field.

\begin{lemma}\label{lemma:KunittopkK}
Let $V$ be a smooth projective variety over a separably closed field $L$ such that the $K$-motive $KM(V)$ is of unit type. Then the natural map $K_0(V)_R\to KT_0(V)$ is an isomorphism and we have~$KT_1(V)=0$.
\end{lemma}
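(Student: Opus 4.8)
The plan is to exploit the structure recalled just above: the assignments $KT_0$, $KT_1$, and the ordinary Grothendieck group $K_0$ are all well-defined on $K$-motives. Concretely, each is a functor from $\KM(L)$ to $R$-modules (for $K_0$ after tensoring with $R$), sending a correspondence $\alpha\in K_0(V_1\times V_2)=\Hom_{\KM(L)}\big(KM(V_1),KM(V_2)\big)$ to the $R$-linear map obtained from the module structure over the correspondence ring together with the ring homomorphism $K_0(V_1\times V_2)\to KT_0(V_1\times V_2)$. Since these assignments are additive in $\alpha$, the resulting functors are additive; and since the map $K_0\to KT_0$ commutes with pull-backs and push-forwards, the induced maps $K_0(-)_R\to KT_0(-)$ form a natural transformation of additive functors on $\KM(L)$. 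Both assertions will then follow by evaluating everything on the unit object and invoking the hypothesis $KM(V)\simeq\uno^{\oplus r}$.

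First I would compute the three functors on the unit $\uno=KM(\Spec L)$. The \'etale homotopy type of $\Spec L$ is a point (as $L$ is separably closed), and $V(\C)$ is a single point when $L=\C$; hence $KT_0(\Spec L)\simeq R$, generated by the class of the trivial line bundle, while $KT_1(\Spec L)=0$ because $\pi_1\big(\Z\times BU(\infty)\big)=0$. On the other hand $K_0(\Spec L)=\Z$, so $K_0(\Spec L)_R=R$, and the natural map $K_0(\Spec L)_R\to KT_0(\Spec L)$ sends $[\cO_{\Spec L}]$ to the class of the trivial line bundle, hence is the identity isomorphism $R\to R$.

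Next I would transport these computations along the isomorphism $KM(V)\simeq\uno^{\oplus r}$. Applying the additive functor $KT_1$ gives $KT_1(V)\simeq KT_1(\Spec L)^{\oplus r}=0$. Applying the natural transformation $K_0(-)_R\to KT_0(-)$ to the same isomorphism identifies the map $K_0(V)_R\to KT_0(V)$ with the $r$-fold direct sum of the isomorphism $K_0(\Spec L)_R\to KT_0(\Spec L)$; being a direct sum of isomorphisms, it is itself an isomorphism. This yields both claims.

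The only nontrivial input — and the main point to handle with care, although it is essentially prepared in the paragraph preceding the lemma — is the additivity and naturality of these functors on $\KM(L)$: one must check that the module structure of $KT_i(V)$ over $KT_0(V\times V)$, pulled back along $K_0(V\times V)\to KT_0(V\times V)$, genuinely makes $KT_i$ an additive functor on $K$-motives, and that $K_0\to KT_0$ respects this structure, so that an isomorphism of motives given by mutually inverse correspondences induces an isomorphism on $KT_i$ compatible with the comparison map. Once this formal input is in place, the remaining verifications — the values of topological and \'etale $K$-theory on a point and the behavior of $K_0$ on a point — are immediate.
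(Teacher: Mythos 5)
Your proposal is correct and follows essentially the same route as the paper: both use that $KT_0$, $KT_1$ (and the comparison map from $K_0$) are well-defined on $K$-motives via the ring homomorphism $K_0(V\times V)\to KT_0(V\times V)$, transport along $KM(V)\simeq\uno^{\oplus r}$, and compute $KT_0(*)=R$, $KT_1(*)=0$ on the point. Your write-up is in fact slightly more explicit than the paper's on the one point it leaves implicit, namely that the comparison map $K_0(V)_R\to KT_0(V)$ is itself identified with a direct sum of copies of the isomorphism on the point (rather than merely both sides being abstractly isomorphic to $R^{\oplus r}$), which is exactly what the compatibility of $K_0\to KT_0$ with correspondences guarantees.
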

\begin{proof}
Let $r$ be such that $KM(V)\simeq \uno^{\oplus r}$. Since the groups $KT_0$ and $KT_1$ are well-defined for $K$-motives, we see that there are isomorphisms
$$
KT_i(V)\simeq KT_i(*)^{\oplus r}\,,\qquad i=0,1\,,
$$
where $*=\Spec(L)$ is the point. Further, we have
that $KT_0(*)=R$ and $KT_1(*)=0$ (for topological $K$-groups this is easily seen, while for \'etale $K$-groups this follows, for instance, from, the Atiyah--Hirzebruch spectral sequence).
\end{proof}

\medskip

\begin{lemma}\label{cor:torsioncohom}
Let $V$ be a smooth projective threefold over a separably closed field $L$ such that the $K$-motive $KM(V)$ is of unit type and let $l$ be a prime number different from the characteristic of~$L$. Then the following holds true:
\begin{itemize}
\item[(i)]
we have $H^i(V)=0$ for $i$ odd and $H^{i}(V)$ is torsion-free for $i$ even;
\item[(ii)]
the cycle class map ${CH^1(V)_R\to H^2(V)}$ is an isomorphism;
\item[(iii)]
we have $H^i_{\acute e t}(V,\Z/l)=0$ for $i$ odd and the canonical map
$H^{i}(V)/l\to H^i_{\acute e t}(V,\Z/l)$ is an isomorphism for $i$ even;
\item[(iv)]
the cycle class map $CH^1(V)/l\to H^2_{\acute e t}(V,\Z/l)$ is an isomorphism.
\end{itemize}
\end{lemma}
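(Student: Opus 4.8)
The plan is to deduce all four statements from two structural inputs: that the natural ring homomorphism $K_0(V)_R\to KT_0(V)$ is an isomorphism with $KT_1(V)=0$ (Lemma~\ref{lemma:KunittopkK}), and that the Atiyah--Hirzebruch spectral sequence degenerates (Lemma~\ref{lemma:degen}). We may and do assume $V$ is irreducible. For the vanishing of odd cohomology note that, by degeneration, $KT_1(V)$ carries a finite filtration whose associated graded pieces are $H^1(V)$, $H^3(V)$, $H^5(V)$ (the terms $E_\infty^{i,j}$ with $i+j=-1$ and $j$ even). Since $KT_1(V)=0$, every such subquotient vanishes, giving $H^i(V)=0$ for $i$ odd.

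The even part of (i) and statement (ii) come together from a single filtered-isomorphism argument. Equip $KT_0(V)$ with the topological filtration, reindexed so that $KT_0(V)=G^0\supseteq G^1\supseteq G^2\supseteq G^3\supseteq G^4=0$ with $\gr^i_G KT_0(V)\simeq H^{2i}(V)$ (using degeneration and $H^{\mathrm{odd}}(V)=0$); note $KT_0(V)\simeq K_0(V)_R$ is a free $R$-module by Lemma~\ref{lemma:unimK}. Let $f\colon K_0(V)_R\isomoto KT_0(V)$ be the isomorphism of Lemma~\ref{lemma:KunittopkK} and let $F^\bullet K_0(V)$ be the codimension-of-support filtration. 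Three facts drive the argument: \emph{(a)} the filtration $F^\bullet K_0(V)$ splits, so each $F^iK_0(V)_R$ is a direct summand (established in the proof of Proposition~\ref{prop:key}); \emph{(b)} $f$ respects the filtrations, $f\big(F^iK_0(V)_R\big)\subseteq G^i$, since the image of a class supported in codimension $\geqslant i$ in the lower graded pieces is computed by rank and Chern classes, which vanish for such sheaves; and \emph{(c)} the ranks agree, $\rk F^iK_0(V)=\rk G^i$, because by Proposition~\ref{prop:known} the rational Chow motive of $V$ is of Lefschetz type, whence the cycle class maps $CH^i(V)_{\bbQ}\to H^{2i}(V)_{\bbQ}$ are isomorphisms and $\rk\gr^i_FK_0(V)=\dim_{\bbQ}H^{2i}(V)_{\bbQ}=\rk H^{2i}(V)$. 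Now for each $i$ the subgroup $f\big(F^iK_0(V)_R\big)$ is a direct summand of $KT_0(V)$, hence saturated, of the same rank as the larger subgroup $G^i$; therefore $G^i/f\big(F^iK_0(V)_R\big)$ is simultaneously torsion (equal ranks) and torsion-free (it injects into the free module $KT_0(V)/f(F^iK_0(V)_R)$), so it vanishes and $f\big(F^iK_0(V)_R\big)=G^i$. Consequently every $G^i$ is a direct summand, so $H^{2i}(V)\simeq\gr^i_G KT_0(V)$ is free, which is the even part of (i), and $\gr^i f\colon\gr^i_FK_0(V)_R\isomoto H^{2i}(V)$ is an isomorphism. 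Taking $i=1$ and identifying $\gr^1_FK_0(V)\simeq CH^1(V)$ via the isomorphism $\varphi_1$ of Proposition~\ref{prop:surjvarphi}, the map $\gr^1 f$ becomes the cycle class map $CH^1(V)_R\to H^2(V)$, which is therefore an isomorphism; this is (ii).

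Statements (iii) and (iv) follow by reduction modulo $l$. For each $i$ one has a short exact sequence $0\to H^i(V)/l\to H^i_{\acute e t}(V,\Z/l)\to H^{i+1}(V)[l]\to 0$, finitely generated coefficients ruling out $\varprojlim^1$ terms; in the étale case this is the Bockstein sequence for $0\to\Z_l\to\Z_l\to\Z/l\to 0$, and in the complex case one first invokes the Artin comparison $H^i_{\acute e t}(V,\Z/l)\simeq H^i(V(\C),\Z/l)$ and then universal coefficients. By (i) the groups $H^\bullet(V)$ vanish in odd degrees and are torsion-free in even degrees, so for $i$ odd both outer terms vanish and $H^i_{\acute e t}(V,\Z/l)=0$, while for $i$ even the right-hand term vanishes and $H^i(V)/l\isomoto H^i_{\acute e t}(V,\Z/l)$; this is (iii). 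Finally, reducing the isomorphism of (ii) modulo $l$, using $CH^1(V)_R/l=CH^1(V)/l$ together with the case $i=2$ of (iii), shows that the cycle class map $CH^1(V)/l\to H^2_{\acute e t}(V,\Z/l)$ is an isomorphism, which is (iv).

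The main obstacle is fact \emph{(b)}: the integral compatibility of the algebraic-to-topological (respectively étale) $K$-theory map with the codimension filtration on the source and the topological filtration on the target. Rationally this is just the Chern character, but it is the integral inclusion $f(F^iK_0(V)_R)\subseteq G^i$ that makes the saturation argument go through, and it is precisely here that the hypothesis $d=3$ is helpful, since only the low-degree invariants (rank, $c_1$, $c_2$) govern the graded pieces below the top and their vanishing on high-codimension sheaves is elementary.
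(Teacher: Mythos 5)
Your argument is correct in substance and reaches all four parts, but the core of it runs differently from the paper. Parts (i)-odd, (iii) and (iv) coincide with the paper's proof (odd vanishing from $KT_1(V)=0$ plus degeneration; universal coefficients; reduction mod $l$). The divergence is in (i)-even and (ii): the paper proves torsion-freeness of even cohomology by Poincar\'e duality (torsion of $H^i(V)$ is dual to torsion of $H^{7-i}(V)$, and odd cohomology vanishes), and proves (ii) by splitting it into injectivity --- the cycle class map is injective because the intersection pairing $CH^1(V)_R\otimes CH^2(V)_R\to R$ is non-degenerate by Proposition~\ref{prop:key}(ii) and factors through cohomology --- and surjectivity, which uses the filtration compatibility of $\theta$ only at level one (a class supported in codimension $\geqslant 1$ has rank zero), giving $\theta\colon F^1K_0(V)_R\isomoto F^1KT_0(V)$ and hence surjectivity on $\gr^1$. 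You instead force $f\big(F^iK_0(V)_R\big)=G^i$ for \emph{every} $i$ by a saturation/rank-counting argument, which yields torsion-freeness of $H^{\even}(V)$ and all graded isomorphisms at once, with no appeal to Poincar\'e duality or to the intersection pairing. This consolidation is appealing, but note what it costs relative to the paper. First, you need the integral inclusions $f\big(F^iK_0(V)_R\big)\subseteq G^i$ at levels $i=2,3$, not just $i=1$: the paper asserts non-strict compatibility of $\theta$ with the full filtrations but only ever uses the trivial level, whereas your proof stands or falls on the higher levels. Showing that the class of a sheaf supported in codimension $\geqslant 2$ (resp.\ on points) lies in $G^2$ (resp.\ $G^3$) is genuinely nontrivial for \'etale $K$-theory over a separably closed field of positive characteristic, where no topological transversality or Thom-space argument is available and one needs purity/Gysin-type input; calling this step ``elementary'' is an overstatement, although your reliance on it is defensible since the paper asserts the same compatibility without proof. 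Second, your rank equality (c) uses that a Lefschetz-type rational Chow motive has cycle class maps $CH^i(V)_{\bbQ}\to H^{2i}(V)_{\bbQ}$ which are isomorphisms after tensoring with $\bbQ_l$ (resp.\ $\bbQ$); this is standard realization-functor folklore, but it is not among the facts recorded in Lemma~\ref{lem:Chowrat}, whereas the paper's injectivity route needs only the unimodularity already established in Proposition~\ref{prop:key}(ii). In short: same skeleton (Lemmas~\ref{lemma:degen} and~\ref{lemma:KunittopkK} plus comparison of the coniveau and Atiyah--Hirzebruch filtrations), but your proof trades the paper's two duality arguments for a stronger, and harder to justify, integral filtration compatibility together with a rational comparison of ranks.
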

\begin{proof}
$(i)$ By Lemma~\ref{lemma:KunittopkK}, the group $KT_1(V)$ vanishes. Hence by Lemma~\ref{lemma:degen}, we see that $H^i(V)=0$ for $i$ odd. By Poincar\'e duality, the torsion subgroup of~$H^{i}(V)$ is Pontryagin dual to the torsion subgroup of~$H^{7-i}(V)$ for any $i\geqslant 0$. Thus $H^{i}(V)$ is torsion-free for $i$ even.

$(ii)$ By Proposition~\ref{prop:key}$(ii)$, the intersection pairing ${CH^1(V)_R\otimes CH^{2}(V)_R\to R}$ is non-degenerate. Hence the cycle class map ${CH^1(V)_R\to H^2(V)}$ is injective, because the intersection pairing factors through cohomology and $R\simeq H^6(V)$.

By Lemma~\ref{lemma:KunittopkK}, the natural map $\theta\colon K_0(V)_R\to KT_0(V)$ is an isomorphism. In addition, the map~$\theta$ respects (non-strictly) the filtration $F^{\bullet}K_0(V)_R$ by codimension of support and the filtration $F^{\bullet}KT_0(V)$ induced by the Atiyah--Hirzebruch spectral sequence. The corresponding map from ${\gr^0_FK_0(V)_R\simeq CH^0(V)_R\simeq R}$ to ${\gr^0_F KT_0(V)\simeq H^0(V)\simeq R}$ is the identity. Hence we have an isomorphism ${\theta\colon F^1K_0(V)_R\stackrel{\sim}\longrightarrow F^1KT_0(V)}$. This implies that the map from ${\gr^1_FK_0(V)_R\simeq CH^1(V)_R}$ to ${\gr^1_F KT_0(V)\simeq H^2(V)}$ is surjective. Also, one checks directly that this map is equal to the cycle class map.

$(iii)$ This is implied by $(i)$ and the universal coefficient theorem (recall that for complex varieties, \'etale cohomology groups with finite constant coefficients coincide with classical complex cohomology groups, see~\cite[Theor.\,III.3.12]{Milne}).

$(iv)$ This follows directly from~$(ii)$ and $(iii)$.
\end{proof}

\begin{remark}
If the characteristic of $L$ is zero, then Lemma~\ref{cor:torsioncohom} can be proved with the help of topological $K$-groups only. Indeed, the variety $V$ is defined over a field which is finitely generated over~$\bbQ$ and can be embedded into $\C$ and \'etale cohomology groups with torsion coefficients are invariant under extensions of algebraically closed fields of characteristic zero, see~\cite[Cor.\,VI.4.3]{Milne}.
\end{remark}

\medskip

Now we describe third \'etale cohomology of a threefold as in Theorem~\ref{thm:main}. All statements and arguments below make sense with coefficients $\Z/n$ after appropriate Tate twists, where $n$ is any natural number not divisible by the characteristic of the ground field, but \mbox{coefficients}~$\Z/2$ are enough for our purposes.

Given a field $F$, by $G_F$ we denote the absolute Galois group of~$F$. For short, by~$H^i(F,\Z/2)$ denote the canonically isomorphic groups $H^i(G_F,\Z/2)\simeq H^i_{\acute e t}\big(\Spec(F),\Z/2\big)$.

Let $V$ be a smooth variety over an arbitrary field $k$ of characteristic not~$2$. Consider the composition
\begin{equation}\label{eq:comp}
k^*/2\otimes CH^1(V)/2\longrightarrow H^1_{\acute e t}(V,\Z/2)\otimes H^2_{\acute e t}(V,\Z/2)\longrightarrow H^3_{\acute e t}(V,\Z/2)\,,
\end{equation}
where the first map is the tensor product of the Kummer isomorphism followed by the pull-back map
\begin{equation}\label{map1}
k^*/2\stackrel{\sim}\longrightarrow H^1(k,\Z/2)\longrightarrow H^1_{\acute e t}(V,\Z/2)
\end{equation}
and the cycle class map
\begin{equation}\label{map2}
CH^1(V)/2\longrightarrow H^2_{\acute e t}(V,\Z/2)
\end{equation}
and the second map is product in cohomology. Since $CH^1\big(\Spec(k(V))\big)=0$, the image of composition~\eqref{eq:comp} is contained in the kernel $NH^3_{\acute e t}(V,\Z/2)$ of the restriction map ${H^3_{\acute e t}(V,\Z/2)\longrightarrow H^3\big(k(V),\Z/2\big)}$. Thus we obtain a homomorphism
$$
\zeta\;:\; k^*/2\otimes CH^1(V)/2\longrightarrow NH^3_{\acute e t}(V,\Z/2)\,.
$$

\begin{prop}\label{prop:prod}
Let $X$ be an irreducible smooth projective variety of dimension $3$ over a field~$k$ such that the \mbox{$K$-motive}~$KM(X)$ is of unit type and the characteristic of $k$ is not $2$. Then we have an isomorphism
$$
\zeta\;:\; k^*/2\otimes CH^1(X)/2\stackrel{\sim}\longrightarrow NH^3_{\acute e t}(X,\Z/2)\,.
$$
\end{prop}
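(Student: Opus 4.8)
The plan is to compute $H^3_{\acute e t}(X,\Z/2)$ by means of the Hochschild--Serre spectral sequence
\[
E_2^{p,q}=H^p\big(G_k,H^q_{\acute e t}(X_{k_s},\Z/2)\big)\Longrightarrow H^{p+q}_{\acute e t}(X,\Z/2),
\]
where $k_s$ is a separable closure of $k$; note that $X_{k_s}$ is irreducible by Proposition~\ref{prop:known} and Lemma~\ref{lem:Chowrat}$(iii)$, so $k_s(X)$ is a field. By Lemma~\ref{cor:torsioncohom}$(iii)$ the groups $H^q_{\acute e t}(X_{k_s},\Z/2)$ vanish for $q$ odd, so all odd rows of the $E_2$-page are zero and hence $d_2=0$. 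Combining Lemma~\ref{cor:torsioncohom}$(iv)$ with Proposition~\ref{prop:key}$(iii)$, the cycle class map identifies $H^2_{\acute e t}(X_{k_s},\Z/2)$ with $CH^1(X_{k_s})/2\simeq CH^1(X)/2$; in particular $G_k$ acts trivially on $H^2_{\acute e t}(X_{k_s},\Z/2)$, and since $\Z/2$ is a field I obtain
\[
E_2^{0,2}=CH^1(X)/2,\qquad E_2^{1,2}=H^1(G_k,\Z/2)\otimes CH^1(X)/2=k^*/2\otimes CH^1(X)/2 .
\]
Thus the source of $\zeta$ is exactly $E_2^{1,2}$, and the strategy is to show that $\zeta$ realizes the graded piece $E_\infty^{1,2}$ as the subgroup $NH^3_{\acute e t}(X,\Z/2)$.

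First I would note that only two spots contribute to $H^3$: since $H^3_{\acute e t}(X_{k_s},\Z/2)=0$ and $H^1_{\acute e t}(X_{k_s},\Z/2)=0$, the only possibly non-zero graded pieces are $E_\infty^{1,2}$ and $E_\infty^{3,0}$, giving a short exact sequence $0\to E_\infty^{3,0}\to H^3_{\acute e t}(X,\Z/2)\to E_\infty^{1,2}\to 0$, in which $E_\infty^{3,0}$ is the image of the edge (pull-back) map $\pi^*\colon H^3(k,\Z/2)\to H^3_{\acute e t}(X,\Z/2)$. To get $E_\infty^{1,2}=E_2^{1,2}$ I would show that the only relevant differential $d_3\colon E_3^{1,2}\to E_3^{4,0}$ vanishes. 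The classes in $E_2^{0,2}$ lift to genuine classes in $H^2_{\acute e t}(X,\Z/2)$, namely the cycle classes of divisors (Lemma~\ref{cor:torsioncohom}), so the restriction $H^2_{\acute e t}(X,\Z/2)\to H^2_{\acute e t}(X_{k_s},\Z/2)$ is onto; hence $E_\infty^{0,2}=E_2^{0,2}$, i.e.\ every element of $E_2^{0,2}$ is a permanent cycle. Since the bottom row $E_2^{\bullet,0}$ carries no outgoing differential and $E_2^{1,2}$ is generated by cup products of $E_2^{1,0}$ with $E_2^{0,2}$, the derivation property of $d_3$ forces $d_3=0$ on $E_2^{1,2}$, so $E_\infty^{1,2}=E_2^{1,2}$.

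Next I would identify the composite $k^*/2\otimes CH^1(X)/2\xrightarrow{\zeta}H^3_{\acute e t}(X,\Z/2)\twoheadrightarrow E_\infty^{1,2}$ with the cup-product pairing $E_2^{1,0}\otimes E_2^{0,2}\to E_2^{1,2}$ furnished by the multiplicative structure of the spectral sequence: the first factor of $\zeta$ lands in the edge image of $H^1(k,\Z/2)=E_2^{1,0}$, with graded class the given element, and the second factor represents the corresponding class in $E_\infty^{0,2}=E_2^{0,2}$. Under the identifications above this pairing is the canonical isomorphism $H^1(G_k,\Z/2)\otimes CH^1(X)/2\stackrel{\sim}\to H^1\big(G_k,CH^1(X)/2\big)$, so $\zeta$ is split injective and $H^3_{\acute e t}(X,\Z/2)=\im(\zeta)\oplus E_\infty^{3,0}$, with $\im(\zeta)\subseteq NH^3_{\acute e t}(X,\Z/2)$ by construction. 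It then remains to prove $NH^3_{\acute e t}(X,\Z/2)\cap E_\infty^{3,0}=0$; as $E_\infty^{3,0}=\im(\pi^*)$, this reduces to showing that the restriction $H^3(k,\Z/2)\to H^3\big(k(X),\Z/2\big)$ is injective. Here Proposition~\ref{prop:key}$(i)$ enters: $X$ carries a zero-cycle $\sum_i n_i[x_i]$ of degree $1$ with $x_i$ smooth closed points, and the specialization maps $s_{x_i}$ together with the corestrictions furnish a retraction $\sum_i n_i\,\mathrm{cor}_{k(x_i)/k}\circ s_{x_i}$, using $s_{x_i}\circ\mathrm{res}_{k(X)/k}=\mathrm{res}_{k(x_i)/k}$ and $\mathrm{cor}_{k(x_i)/k}\circ\mathrm{res}_{k(x_i)/k}=[k(x_i):k]$, so the total is $\sum_i n_i[k(x_i):k]=1$. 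Hence the restriction is injective and $\zeta$ maps isomorphically onto $NH^3_{\acute e t}(X,\Z/2)$.

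The main obstacle I expect is the vanishing of the differential $d_3$ out of the $(1,2)$-spot, that is, the equality $E_\infty^{1,2}=E_2^{1,2}$: everything hinges on the fact that divisor classes define permanent cycles in $E^{0,2}$ together with the multiplicativity of the spectral sequence, and one must check carefully that these cup-product identifications are compatible with the definition of $\zeta$ through the Kummer and cycle-class maps. The injectivity of the restriction to the generic point is more routine but still essential, and it is precisely where the degree-one zero-cycle of Proposition~\ref{prop:key}$(i)$ is used.
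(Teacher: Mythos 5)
Your proposal is correct and takes essentially the same approach as the paper's proof: the same Hochschild--Serre spectral sequence analysis based on Lemma~\ref{cor:torsioncohom}$(iii)$,$(iv)$ and Proposition~\ref{prop:key}$(iii)$, the same use of multiplicativity to identify $\zeta$ with the cup product $E_2^{1,0}\otimes E_2^{0,2}\to E_2^{1,2}$ (the paper deduces the needed surjectivity from the section property $\xi\circ\zeta=\pm\operatorname{id}$ rather than from the Leibniz rule for $d_3$, but this is the same idea), and the same reduction to injectivity of $H^3(k,\Z/2)\to H^3\big(k(X),\Z/2\big)$ established via the degree-one zero-cycle of Proposition~\ref{prop:key}$(i)$. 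Your specialization-plus-corestriction retraction is precisely the paper's argument phrased without the language of unramified cohomology (the paper factors the restriction through $H^3_{nr}\big(k(X),\Z/2\big)$ and uses contravariance of unramified cohomology with respect to the zero-cycle).
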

\begin{proof}
In order to compute $H^3_{\acute e t}(X,\Z/2)$ and $NH^3_{\acute e t}(X,\Z/2)$, we analyze the Hochschild--Serre spectral sequence
$$
{E_2^{ij}=H^i\big(G_k,H^j_{\acute e t}(X_{k^{sep}},\Z/2)\big)}\Rightarrow H^{i+j}_{\acute e t}(X,\Z/2)\,,
$$
where $k^{sep}$ is a separable closure of $k$.

Clearly, the $K$-motive of $X_{k^{sep}}$ is of unit type. Therefore, by Lemma~\ref{cor:torsioncohom}$(iii)$, we have the vanishing
$$
H^1_{\acute e t}(X_{k^{sep}},\Z/2)=H^3_{\acute e t}(X_{k^{sep}},\Z/2)=0\,.
$$
By Proposition~\ref{prop:key}$(iii)$ and Lemma~\ref{cor:torsioncohom}$(iv)$, the cycle class map defines an isomorphism
$$
CH^1(X)/2\stackrel{\sim}\longrightarrow H^2_{\acute e t}(X_{k^{sep}},\Z/2)\,.
$$
Since the action of $G_k$ on $CH^1(X)/2$ is trivial, we obtain the isomorphisms
$$
H^1\big(G_k,H^2_{\acute e t}(X_{k^{sep}},\Z/2)\big)\simeq H^1(k,\Z/2)\otimes CH^1(X)/2\simeq k^*/2\otimes CH^1(X)/2\,.
$$
Hence the $E_3$-term of the Hochschild--Serre spectral sequence looks as follows:
$$
\xymatrix{
0&0&0&0\\
CH^1(X)/2\ar[rrrdd]^{d_3}&k^*/2\otimes CH^1(X)/2&{*} &{*}\\
0&0&0&0\\
\Z/2&k^*/2&H^2(k,\Z/2)&H^3(k,\Z/2)}
$$
Note that the composition
$$
H^2_{\acute e t}(X,\Z/2)\longrightarrow H^0\big(G_k,H^2(X_{k^{sep}},\Z/2)\big)\stackrel{d_3}\longrightarrow H^3(G_k,\Z/2)
$$
vanishes. Since the isomorphism ${CH^1(X)/2\stackrel{\sim}\longrightarrow H^0\big(G_k,H^2(X_{k^{sep}},\Z/2)\big)}$ factors through the cycle class map $CH^1(X)/2\to H^2_{\acute e t}(X,\Z/2)$, we obtain that the differential ${d_3\colon CH^1(X)/2\to H^3(k,\Z/2)}$ vanishes. Thus we see that the Hochschild--Serre spectral sequence yields an exact sequence
$$
0\longrightarrow H^3(k,\Z/2)\longrightarrow H^3_{\acute e t}(X,\Z/2)\stackrel{\xi}\longrightarrow k^*/2\otimes CH^1(X)/2\,.
$$
It follows from multiplicativity of the Hochschild--Serre spectral sequence that the composition~$\xi\circ \zeta$ is equal (up to sign) to the identity and, in particular, $\xi$ is surjective.

We obtain a commutative diagram (up to sign) with exact column and raw
$$
\xymatrix{
&&0\ar[d]\\
&&NH^3_{\acute e t}(X,\Z/2)\ar[d]\\
0\ar[r]&H^3(k,\Z/2)\ar[r]\ar[rd]^{\gamma} &H^3_{\acute e t}(X,\Z/2)\ar[r]^{\xi}\ar[d]&k^*/2\otimes CH^1(X)/2\ar[r]\ar[lu]_{\zeta}&0\\
&&H^3\big(k(X),\Z/2\big)}
$$
Thus in order to prove that $\zeta$ is an isomorphism, it remains to show that the map $\gamma$ is injective. Recall that the unramified cohomology group is defined by the formula
$$
H^3_{nr}(X,\Z/2):=\bigcap_{D\subset X}\Ker\big( H^3(k(X),\Z/2)\to H^2(k(D),\Z/2)\big)\,,
$$
where the intersection is taken over all prime divisors $D\subset X$ and the maps ${H^3(k(X),\Z/2)\to H^2(k(D),\Z/2)}$ are residues, see, e.g,~\cite[\S\,4.1]{CT}. It follows from the localization exact sequence for \'etale cohomology that $\gamma$ factors as the composition
$$
H^3(k,\Z/2)\stackrel{\tilde\gamma}\longrightarrow H^3_{nr}\big(k(X),\Z/2\big)\hookrightarrow H^3\big(k(X),\Z/2\big)\,.
$$
By Proposition~\ref{prop:key}$(i)$, $X$ has a zero-cycle $\alpha$ of degree one. Since unramified cohomology groups are contravariant with respect to varieties, see~\cite[\S\,2.1]{CT}, the zero-cycle $\alpha$ defines the map
$$
\alpha^*\;:\; H^3_{nr}(X,\Z/2)\to H^3_{nr}\big(\Spec(k),\Z/2\big)=H^3(k,\Z/2)\,.
$$
The composition $\alpha^*\circ\tilde\gamma$ is the identity, whence $\tilde\gamma$ is injective and $\gamma$ is injective as well.
\end{proof}

\medskip

Now we pass to $K$-cohomology, that is, cohomology of sheaves of $K$-groups. Let $V$ be an irreducible smooth projective variety over $k$. Let $\cK_i$, $i\geqslant 0$, denote the Zariski sheaf on~$V$ associated with the presheaf that sends an open subset $U\subset V$ to the algebraic $K$-group $K_i(U)$ (in particular, $\cK_0=\underline{\Z}$ and $\cK_1=\cO_V^*$). By a result of Quillen in~\cite[\S\,7.5]{Qu}, $K$-cohomology groups~$H^i(V,\cK_{j})$ are canonically isomorphic to cohomology groups of a Gersten complex, which implies that ${H^i(V,\cK_{j})=0}$ if $i>j$ and that there are canonical isomorphisms
$$
H^i(V,\cK_i)\simeq CH^i(V)\,,\qquad i\geqslant 0\,.
$$
One has the Brown--Gersten spectral sequence
$$
E_2^{ij}=H^i(V,\cK_{-j})\Rightarrow K_{-i-j}(V)
$$
such that the arising filtration on $K$-groups is the filtration by codimension of support, see~\cite{BG}. Product between algebraic $K$-groups defines naturally product between $K$-cohomology groups and the Brown--Gersten spectral sequence is multiplicative in a natural sense (see, e.g.,~\cite[Theor.\,69]{Gil}).

For each $i\geqslant 0$, there are no differentials in the Brown--Gersten spectral sequence that come out of $H^i(V,\cK_i)$ and the arising map $CH^i(V)\simeq H^i(V,\cK_i)\to\gr^i_FK_0(V)$ coincides (up to sign) with the map~$\varphi_i$ from Proposition~\ref{prop:surjvarphi}. All differentials that come out of $H^0(V,\cK_1)\simeq k^*$ are zero by functoriality of the Brown--Gersten spectral sequence applied to the morphism $V\to\Spec(k)$. Thus the $E_2$-term looks as follows:
$$
\xymatrix{
\Z&0\\
k^*&CH^1(V)&0\\
H^0(V,\cK_2)\ar[rrd]^{d_2}&H^1(V,\cK_2)\ar[rrd]^{d_2}&CH^2(V)&0\\
H^0(V,\cK_3)&H^1(V,\cK_3)&H^2(V,\cK_3)&CH^3(V)&0}
$$

We see that there is an exact sequence
\begin{equation}\label{eq:Brown}
H^1(V,\cK_2)\stackrel{d_2}\longrightarrow CH^3(V)\stackrel{\varphi_3}\longrightarrow \gr^3_FK_0(V)\,.
\end{equation}
Since the kernel of the homomorphism $\varphi_3$ is $2$-torsion by Proposition~\ref{prop:surjvarphi}, the differential $d_2$ factors through the quotient~$H^1(V,\cK_2)/2$.

\medskip

By a result of Merkurjev and Suslin, see~\cite[\S\,18]{MS}, there is an exact sequence
\begin{equation}\label{eq:MS}
0\longrightarrow H^1(V,\cK_2)/2\stackrel{\nu}\longrightarrow NH^3_{\acute e t}(V,\Z/2)\longrightarrow CH^2(V)_2\longrightarrow 0\,,
\end{equation}
where, as above, $CH^2(V)_2$ denotes the $2$-torsion subgroup of $CH^2(V)$.

Let us describe the map $\nu$ in more detail. Let $\cH^i$ denote the Zariski sheaf on $V$ associated with the presheaf that sends an open subset $U\subset V$ to \'etale cohomology $H^i_{\acute e t}(U,\Z/2)$. We have the \'etale Chern classes $\cK_i\to \cH^i$, see~\cite[\S\,II.2.3]{Sou}, which define the corresponding map between cohomology
\begin{equation}\label{MS1}
H^1(V,\cK_2)/2\longrightarrow H^1(V,\cH^2)\,.
\end{equation}
Note that for $i=1$ the \'etale Chern class is given by the Kummer theory, while for $i=2$ this is the norm residue symbol on decomposable elements.
Further, the direct image of sheaves from the \'etale topology to the Zariski topology defines the Leray spectral sequence
$$
E_2^{ij}=H^i(V,\cH^{j})\Rightarrow H^{i+j}_{\acute e t}(V,\Z/2)\,.
$$
By the main result of Bloch and Ogus in~\cite{BO}, we have $H^i(V,\cH^j)=0$ for $i>j$. Therefore the $E_2$-term of the Leray spectral sequence looks as follows:
$$
\xymatrix{
H^0(V,\cH^3)\ar[rrd]^{d_2}&H^1(V,\cH^3)&H^2(V,\cH^3)\\
H^0(V,\cH^2)&H^1(V,\cH^2)&H^2(V,\cH^2)\\
H^0(V,\cH^1)&H^1(V,\cH^1)&0\\
\Z/2&0&0}
$$
Thus we obtain an injective map
\begin{equation}\label{MS2}
H^1(V,\cH^2)\hookrightarrow H^3_{\acute e t}(V,\Z/2)\,.
\end{equation}
The composition of the maps~\eqref{MS1} and~\eqref{MS2} gives $\nu$. Also, note that the compositions
$$
k^*/2\stackrel{\sim}\longrightarrow H^0(V,\cK_1)/2\longrightarrow H^0(V,\cH^1)\stackrel{\sim}\longrightarrow H^1_{\acute e t}(V,\Z/2)\,,
$$
$$
CH^1(V)/2\stackrel{\sim}\longrightarrow H^1(V,\cK_1)/2\longrightarrow H^1(V,\cH^1)\hookrightarrow H^2_{\acute e t}(V,\Z/2)
$$
coincide with the maps~\eqref{map1} and~\eqref{map2}, respectively.

\medskip

Proposition~\ref{prop:prod} implies the following result.

\begin{cor}\label{cor:notors}
Under assumptions of Proposition~\ref{prop:prod}, the group $CH^3(X)$ is torsion-free.
\end{cor}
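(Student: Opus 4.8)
The plan is to show that the differential $d_2\colon H^1(X,\cK_2)\to CH^3(X)$ from the Brown--Gersten exact sequence~\eqref{eq:Brown} vanishes, which forces $CH^3(X)$ to be torsion-free. Indeed, by Proposition~\ref{prop:surjvarphi} the torsion subgroup of $CH^3(X)$ is $2$-torsion and equals $\Ker(\varphi_3)$, and by~\eqref{eq:Brown} this kernel is exactly $\im(d_2)$; since the image is $2$-torsion, $d_2$ descends to a map $\bar d_2\colon H^1(X,\cK_2)/2\to CH^3(X)$, and it suffices to prove $\bar d_2=0$. The strategy is to exhibit a surjection onto $H^1(X,\cK_2)/2$ by explicit decomposable classes on which $d_2$ is forced to vanish by multiplicativity.

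First I would establish that the product map
$$
\mu\;:\; k^*/2\otimes CH^1(X)/2\longrightarrow H^1(X,\cK_2)/2
$$
(coming from the $K$-cohomology pairing $H^0(X,\cK_1)\otimes H^1(X,\cK_1)\to H^1(X,\cK_2)$, using $H^0(X,\cK_1)\simeq k^*$ and $H^1(X,\cK_1)\simeq CH^1(X)$) is an isomorphism. By Proposition~\ref{prop:key}$(ii)$ the group $CH^2(X)$ is free, so $CH^2(X)_2=0$, and the Merkurjev--Suslin sequence~\eqref{eq:MS} shows that $\nu\colon H^1(X,\cK_2)/2\to NH^3_{\acute e t}(X,\Z/2)$ is an isomorphism. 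By Proposition~\ref{prop:prod}, the map $\zeta\colon k^*/2\otimes CH^1(X)/2\to NH^3_{\acute e t}(X,\Z/2)$ is also an isomorphism. The compatibility of the $K$-cohomology product with the \'etale cup product under the Chern class maps $\cK_i\to\cH^i$---together with the compatibilities recorded after~\eqref{eq:MS} between the relevant compositions and the maps~\eqref{map1},~\eqref{map2}---gives $\nu\circ\mu=\pm\zeta$. Hence $\mu=\pm\nu^{-1}\circ\zeta$ is an isomorphism, in particular surjective.

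Second, I would show $\bar d_2\circ\mu=0$ using multiplicativity of the Brown--Gersten spectral sequence. For $a\in k^*$ and $D\in CH^1(X)$ the Leibniz rule gives
$$
d_2(\{a\}\cdot D)=d_2(\{a\})\cdot D\pm\{a\}\cdot d_2(D)=0,
$$
since $d_2$ vanishes on $H^0(X,\cK_1)\simeq k^*$ (functoriality for $X\to\Spec(k)$) and on $H^1(X,\cK_1)=CH^1(X)$ (no differentials leave the diagonal entries $H^i(X,\cK_i)$). As $\mu$ is surjective onto $H^1(X,\cK_2)/2$, this forces $\bar d_2=0$; therefore $\im(d_2)=\Ker(\varphi_3)=0$ and $CH^3(X)$ is torsion-free. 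The only genuinely delicate point is the identity $\nu\circ\mu=\pm\zeta$, that is, matching the $K$-cohomological product with the \'etale product through the Chern and norm-residue maps; everything else is formal once Propositions~\ref{prop:key},~\ref{prop:prod} and the multiplicative structures of the two spectral sequences are in hand.
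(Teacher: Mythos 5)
Your proof is correct and follows essentially the same route as the paper: you establish $\nu\circ\mu=\pm\zeta$ to deduce that $\mu$ is an isomorphism (the paper's main line only needs injectivity of $\nu$ from~\eqref{eq:MS}, while you make $\nu$ an isomorphism via $CH^2(X)_2=0$ --- an alternative the paper itself mentions parenthetically), and then kill $d_2$ by the Leibniz rule and multiplicativity of the Brown--Gersten spectral sequence, exactly as intended. The only hair to split is that identifying the full torsion subgroup of $CH^3(X)$ with $\Ker(\varphi_3)$ requires torsion-freeness of $K_0(X)$ (Lemma~\ref{lemma:unimK}), not just Proposition~\ref{prop:surjvarphi}; but once $d_2=0$ the injectivity of $\varphi_3$ into the torsion-free group $K_0(X)$ finishes the argument just as in the paper.
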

\begin{proof}
The composition of the product map between $K$-cohomology
$$
\mu\;:\;k^*/2\otimes CH^1(X)/2\longrightarrow H^1(X,\cK_2)/2
$$
with the injective map~${\nu\colon H^1(X,\cK_2)/2\to NH^3_{\acute e t}(X,\Z/2)}$ from the exact sequence~\eqref{eq:MS} is equal (up to sign) to the isomorphism~$\zeta$ from Proposition~\ref{prop:prod} (this follows from multiplicativity of the \'etale Chern classes and the multiplicativity of the Leray spectral sequence). Hence~$\mu$ is an isomorphism (also, we obtain that~$\nu$ is an isomorphism as well, which follows alternatively from Proposition~\ref{prop:key} and the exact sequence~\eqref{eq:MS}).

Recall that the differential $d_2\colon H^1(X,\cK_2)\to CH^3(X)$ factors as a composition
$$
H^1(X,\cK_2)\longrightarrow H^1(X,\cK_2)/2\longrightarrow CH^3(X)\,.
$$
Since $\mu$ is an isomorphism and the Brown--Gersten spectral sequence is multiplicative, we see that the differential $d_2\colon H^1(X,\cK_2)\to CH^3(X)$ vanishes. Thus the exact sequence~\eqref{eq:Brown} implies that $\varphi_3$ is injective and $CH^3(X)$ is torsion-free, because so is $K_0(X)$.
\end{proof}

\medskip

Now we are ready to prove our main result.

\begin{proof}[Proof of Theorem~\ref{thm:main}]
We may assume that $X$ is irreducible because, clearly, a direct summand of a unit type \mbox{$K$-motive} is of unit type as well (look at the endomorphisms algebra of a unit type \mbox{$K$-motive}).

By Remark~\ref{rmk:lawdim}, we need to consider the case when $X$ is a threefold. By Corollary~\ref{cor:split}, we have an isomorphism
$M(X)\simeq M\oplus N$, where the Chow motive $M$ is of Lefschetz type and for any field extension $k\subset L$, the only non-trivial Chow group of the Chow motive $N_L$ is $CH^3(N_L)$, which coincides with $CH^3(X_L)_2$. By Corollary~\ref{cor:notors}, this group vanishes as well, because the $K$-motive of $X_L$ in $\KM(L)$ is of unit type. Now by Remark~\ref{rmk:Chowvan}, we have $N=0$, which finishes the proof of the theorem.
\end{proof}

\end{document}